\title[]{Global weak solutions for generalized SQG in bounded domains}
\author{Huy Quang Nguyen}
\address{Program in Applied and Computational Mathematics, Princeton University, Princeton, NJ 08544}
\email{qn@math.princeton.edu}
\newcommand{\bq}{\begin{equation}}
\newcommand{\eq}{\end{equation}}
\newcommand{\bqa}{\begin{eqnarray*}}
\newcommand{\eqa}{\end{eqnarray*}}
\newcommand{\Rr}{\mathbb{R}}
\newcommand{\Tt}{\mathbb{T}}
\newcommand{\pa}{\partial}
\newcommand{\la}{\label}
\newcommand{\na}{\nabla}
\newcommand{\be}{\begin{equation}}
\newcommand{\ee}{\end{equation}}
\newcommand{\ba}{\begin{array}{l}}
\newcommand{\ea}{\end{array}}
\theoremstyle{plain}
\newtheorem{theo}{Theorem}[section]
\newtheorem{prop}[theo]{Proposition}
\newtheorem{lemm}[theo]{Lemma}
\theoremstyle{definition}
\newtheorem{rema}[theo]{Remark}
\DeclareMathOperator{\cnx}{div}
\DeclareSymbolFont{pletters}{OT1}{cmr}{m}{sl}
\DeclareMathSymbol{s}{\mathalpha}{pletters}{`s}
\def\tt{\theta}
\def\eps{\varepsilon}
\def\na{\nabla}
\def\la{\left\lvert}
\def\le{\leq}
\def\L#1{\langle #1 \rangle}
\def\mez{\frac{1}{2}}
\def\ra{\right\rvert}
\def\P{\mathbb P}
\def\L{\Lambda}
\def\cN{\mathcal{N}}
\numberwithin{equation}{section}
\date{today}
\begin{document}
\begin{abstract}
We prove the existence of global $L^2$ weak solutions for a family of generalized inviscid surface-quasi geostrophic (SQG) equations in bounded domains of the plane. In these equations, the active scalar is transported by a velocity field which is determined by the scalar through a more singular nonlocal operator compared to the SQG equation. The result is obtained by establishing appropriate commutator representations for the weak formulation together with good bounds for them in bounded domains.
\end{abstract}

\keywords{generalized SQG, global weak solutions, bounded domains}

\noindent\thanks{\em{ MSC Classification:  35Q35, 35Q86.}}

\maketitle
\section{Introduction}
Let $\Omega\subset \Rr^2$ be an open bounded set with smooth boundary. Denote 
\[
\L=(-\Delta)^\mez
\]
where $-\Delta$ is the Laplacian operator in $\Omega$ with homogeneous Dirichlet boundary condition. 

We consider the following family of active scalar equations 
\bq\label{sSQG}
\partial_t\tt  +u\cdot \nabla\tt =0,
\eq
where $\tt = \tt(x,t)$, $u = u(x,t)$ with $(x, t)\in \Omega\times [0, \infty)$ and with the velocity $u$ given by
\bq\label{u:defi}
u = \nabla^\perp \psi,
\eq
\bq\label{psi}
\psi=\L^{-\alpha}\tt,\quad \alpha\in [0, 2].
\eq
Here, fractional powers of the Laplacian $-\Delta$ are based on eigenfunction expansions (see Section \ref{section:frac} below for definitions and notations) and $\psi$ is called the stream function. By \eqref{u:defi} the velocity $u$ is automatically divergence-free. The case $\alpha=2$ corresponds to the 2D Euler equation in the vorticity formulation. When $\alpha=1$, \eqref{sSQG} is the surface-quasi geostrophic (SQG) equation of geophysical significance (\cite{held}), which also serves as a two-dimensional model of the three-dimensional Euler equations in view of many striking physical and mathematical analogies between them (\cite{cmt}). The global regularity issue is known for the 2D Euler equations but remains open for any $\alpha<2$. Growth of solutions when $\alpha=1, 2$ and $\Omega=\Rr^2, \Tt^2$ was studied in \cite{CF}; nonexistence of simple hyperbolic blow-up when $\alpha=1$ and $\Omega=\Rr^2$ was confirmed in \cite{Cor}. We refer to \cite{ccw} for a regularity criterion when $\alpha\in [1, 2]$ and $\Omega=\Rr^2$. On the other hand, it is recently shown in \cite{Kise} finite time blow-up for patch solutions of \eqref{sSQG} in the half plane with small $\alpha<2$. The velocity $u$ becomes more singular when $\alpha$ decreases, and in particular, $u$ is not in $L^2(\Omega)$ if $\tt$ is in $L^2(\Omega)$ and $\alpha<1$. Equations \eqref{sSQG} with $\alpha\in (0, 1)$ were introduced in \cite{cccgw} to understand solutions to the SQG-type equations with even more singular velocity fields. More precisely, it was established in \cite{cccgw} the existence of global $L^2$ weak solutions on the torus $\Tt^2$, together with local existence and uniqueness of strong solutions in $\Rr^2$. The borderline case $\alpha=0$ is surprisingly easy due to the cancellation of the nonlinear term: \eqref{sSQG} reduces to the simple equation $\partial_t\tt=0$, and thus $\tt(\cdot, t)=\tt(\cdot, 0)$ for all $t>0$. On the other hand, if $\alpha<0$ then the stream function $\psi=\L^{-\alpha} \tt$ is not well-defined when $\tt\in L^2(\Omega)$ noticing that there is no dissipation in the equation.

In this paper, we are interested in the issue of global weak solutions for \eqref{sSQG} with $\alpha \in (0, 1)$ in arbitrary (smooth) bounded domains of $\Rr^2$. Let us recall that the existence of global weak solutions for SQG $(\alpha=1$) were first proved in the thesis of Resnick \cite{Res} in the periodic case. This highlights a difference between the nonlinearities of the   SQG equation and the 3D Euler equations: SQG has weak continuity in $L^2$ while the Euler equations do not. The weak continuity of SQG is due to a remarkable commutator structure which was subsequently revisited in \cite{ccw} and used in the proof of absence of anomalous dissipation in \cite{ctv}. In \cite{ConNgu}, this structure was adapted to arbitrary bounded domains to take into account the lack of translation invariance of the fractional Laplacian in domains: a new commutator between the fractional Laplacian and differentiation appears. In addition to that, with the more singular constitutive laws \eqref{psi},  in order to establish the weak continuity of the nonlinearity $u\cdot \nabla\tt$ we will need to find appropriate commutator representations for which good bounds can be derived. Let us emphasize that many known  commutator estimates for fractional Laplacian in the whole space (or on tori) are too expensive for bounded domains due to possible singularity near the boundary or the lack of powerful tools of Fourier analysis. For further results on fractional Laplacian and SQG in bounded domains, we refer to \cite{CabTan, CS, ConIgn, ConIgn2}. 

Our main result is:
\begin{theo}\label{main}
Let $\alpha\in (0, 1)$ and $\tt_0\in L^2(\Omega)$. There exists a weak solution of \eqref{sSQG},  $\tt\in L^\infty([0, \infty); L^2(\Omega))$ with initial data $\tt_0$. That is, for any $T\ge 0$ and $\phi\in C^\infty_0(\Omega\times (0, T))$, $\tt$ satisfies
\bq\label{weak}
\int_0^T\int_\Omega\tt(x, t)\partial_t\phi(x, t)dxdt+\int_0^T\cN(\psi, \phi)dt=0
\eq
and the initial data is attained
\bq\label{weak:data}
\tt(\cdot, 0)=\tt_0(\cdot)\quad\text{in}~H^{-\eps}(\Omega) \quad\forall \eps>0.
\eq
Here,
\bq\label{def:cN}
\cN(\psi, \phi)=\mez \int_\Omega  [\Lambda^\alpha, \nabla^\perp]\psi\cdot \nabla\phi\psi dx-\mez\int_\Omega \L^{-1+\alpha}\nabla^\perp\psi\cdot \L^{1-\alpha}[\Lambda^\alpha,  \nabla\phi] \psi dx.
\eq
Moreover, $\tt$ obeys the energy inequality
\bq\label{energyin}
\Vert \tt(\cdot, t)\Vert^2_{L^2(\Omega)}\le \Vert \tt_0\Vert^2_{L^2(\Omega)}\quad\text{\it a.e.}~t\ge 0.
\eq
Furthermore,  the stream function $\psi\in C([0, \infty); D(\L^{\alpha-\eps}))$ for any $\eps>0$ and its $D(\L^{\frac{\alpha}{2}})$ norm is preserved,
\[
\Vert \psi(\cdot, t)\Vert_{D(\L^{\frac{\alpha}{2}})}=\Vert \psi(\cdot, 0)\Vert_{D(\L^{\frac{\alpha}{2}})}\quad\forall t>0.
\]
\end{theo}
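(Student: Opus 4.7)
The plan is to construct weak solutions by a Galerkin approximation in the Dirichlet eigenbasis and then pass to the limit using the commutator representation \eqref{def:cN}. Let $\{w_k\}_{k\ge1}$ be the $L^2(\Omega)$-orthonormal basis of eigenfunctions of $-\Delta$ with Dirichlet boundary conditions, and let $\Pi_n$ be the orthogonal projection onto the first $n$ of them. I would solve the finite-dimensional ODE
\[
\pa_t\tt_n+\Pi_n(u_n\cdot\na\tt_n)=0,\qquad u_n=\na^\perp\L^{-\alpha}\tt_n,\qquad \tt_n(0)=\Pi_n\tt_0.
\]
Since $u_n$ is divergence-free and $\Pi_n$ is self-adjoint, $\|\tt_n(t)\|_{L^2}$ is conserved, so the approximate solutions exist globally. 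Writing $\psi_n=\L^{-\alpha}\tt_n$ and using the boundedness of $\L^{-\alpha}$ on $L^2$, one obtains the uniform bound $\|\psi_n\|_{L^\infty(0,\infty;D(\L^\alpha))}\le C\|\tt_0\|_{L^2}$.

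I would then derive the representation \eqref{def:cN} at the smooth level. Testing the Galerkin equation against $\phi\in C^\infty_0(\Omega\times(0,T))$, the nonlinear contribution equals $\int\na^\perp\psi_n\cdot\na\phi\,\L^\alpha\psi_n\,dx$. Using the self-adjointness of $\L^\alpha$, expanding $\L^\alpha(\na^\perp\psi\cdot\na\phi)$ by introducing the commutators $[\L^\alpha,\na^\perp]$ and $[\L^\alpha,\na\phi]$, and integrating by parts with the divergence-free property of $\na^\perp$, produces exactly $\cN(\psi_n,\phi)$ up to a residue coming from $\Pi_n$ applied to the nonlinearity against the smooth test $\phi$, which vanishes as $n\to\infty$.

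To pass to the limit I would combine the uniform bound on $\psi_n$ with a bound on $\pa_t\tt_n$ in some $L^\infty(0,T;H^{-k}(\Omega))$ extracted directly from the equation, and apply the Aubin--Lions lemma to obtain a subsequence with $\psi_n\to\psi$ in $C([0,T];D(\L^{\alpha-\eps}))$ for every $\eps>0$, while $\tt_n\overset{*}{\rightharpoonup}\tt$ in $L^\infty(0,T;L^2)$. The weak continuity of $\cN(\cdot,\phi)$ then reduces to commutator estimates in bounded domains: for the first term of \eqref{def:cN}, $[\L^\alpha,\na^\perp]$ has effective order strictly less than $\alpha$, so $[\L^\alpha,\na^\perp]\psi_n$ converges strongly in $L^2$; for the second, since $\phi$ is smooth and supported away from $\pa\Omega$, $\L^{1-\alpha}[\L^\alpha,\na\phi]$ is a bounded (smoothing) operator on $L^2$ giving strong convergence of the second factor, while $\L^{-1+\alpha}\na^\perp\psi_n=\L^{-1}\na^\perp\L^\alpha\psi_n$ is bounded in $L^2$ and converges weakly. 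Strong$\times$weak then yields $\cN(\psi_n,\phi)\to\cN(\psi,\phi)$ in $L^1_t$.

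The remaining assertions I treat as routine: \eqref{energyin} follows from $\|\tt_n(t)\|_{L^2}=\|\Pi_n\tt_0\|_{L^2}\le\|\tt_0\|_{L^2}$ by weak-$\star$ lower semicontinuity; \eqref{weak:data} follows from time continuity of $\psi$ in $D(\L^{\alpha-\eps})$, transferred to $\tt=\L^\alpha\psi$ in a negative Sobolev space; the conservation $\|\psi(t)\|_{D(\L^{\alpha/2})}=\|\psi_0\|_{D(\L^{\alpha/2})}$ is the Galerkin-level identity $\pa_t\|\L^{-\alpha/2}\tt_n\|_{L^2}^2=0$ (a direct calculation using $u_n\cdot\na\psi_n=\na^\perp\psi_n\cdot\na\psi_n=0$) transferred to the limit via strong continuity. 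The hard part, and the focus of any full proof, is the commutator analysis in bounded domains: without translation invariance or Fourier multipliers, one must exploit the spectral/heat-semigroup representation of $\L^\alpha$ and carefully control the boundary-induced contributions, extending to the range $\alpha\in(0,1)$ the techniques of \cite{ConNgu}.
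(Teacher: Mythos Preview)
Your proposal has a genuine gap that the paper explicitly identifies and circumvents. You use a single-tier Galerkin scheme and claim that the ``residue coming from $\Pi_n$ applied to the nonlinearity against the smooth test $\phi$\ldots vanishes as $n\to\infty$.'' But testing the Galerkin system against $\phi$ yields, after self-adjointness of $\Pi_n$ and integration by parts, the nonlinear term $\int_\Omega u_n\theta_n\cdot\nabla\Pi_n\phi\,dx$, not $\int_\Omega u_n\theta_n\cdot\nabla\phi\,dx$. The residue is $\int_\Omega u_n\theta_n\cdot\nabla(\Pi_n\phi-\phi)\,dx$, and to make it vanish you would need $u_n\theta_n$ uniformly bounded in $L^1(\Omega)$; when $\alpha<1$ this fails because $u_n=\nabla^\perp\Lambda^{-\alpha}\theta_n$ is not controlled in $L^2$ by $\|\theta_n\|_{L^2}$. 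Alternatively you could try to apply the commutator representation with $\Pi_n\phi$ in place of $\phi$, but $\nabla\Pi_n\phi$ does \emph{not} vanish on $\partial\Omega$ (eigenfunctions vanish on the boundary, their gradients do not), so the weight $\nabla\Pi_n\phi\cdot d(\cdot)^{-\alpha-2}$ required by Theorem~\ref{commu:CN} is not in $L^2$ uniformly in $n$. This is exactly the obstruction discussed in Remark~\ref{rema:method}; the paper resolves it by a two-tier approximation (Galerkin inside a vanishing-viscosity layer), so that at the viscous level $\psi^\eps\in H^1_0$ and the commutator representation can be written with the genuine test function $\phi$ before letting $\eps\to 0$.

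There is a second, subtler gap in your compactness argument for the second term of $\cN$. You assert that $\Lambda^{1-\alpha}[\Lambda^\alpha,\nabla\phi]$ is bounded on $L^2$ and hence gives strong convergence of $\Lambda^{1-\alpha}[\Lambda^\alpha,\nabla\phi]\psi_n$. What one actually has (via the identity $\Lambda^{-\alpha}[\Lambda^\alpha,a]=[a,\Lambda^{-\alpha}]\Lambda^\alpha$ and Theorem~\ref{commu:new}) is boundedness from $D(\Lambda^\alpha)$ to $L^2$, which only yields weak $L^2$ convergence; pairing this against $\Lambda^{-1+\alpha}\nabla^\perp\psi_n$, which is itself only weakly convergent in $L^2$, does not close. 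The paper handles this by introducing an auxiliary parameter $\delta\in(0,\min(\alpha,1-\alpha))$ and splitting $\cN_2$ into two pieces (Lemma~\ref{lemm:key2}, equation~\eqref{key3}), each of which pairs a factor that converges strongly in $D(\Lambda^{\alpha-\delta})$ or $D(\Lambda^{\delta})$ against one that is merely bounded in $D(\Lambda^\alpha)$; this is what makes the strong$\times$weak argument go through.
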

In Theorem \ref{main} and what follows, 
\[
[A, B]:=AB-BA.
\]
denotes the commutator of two operators $A$ and $B$.

When $\alpha=0$, $u=R^\perp \tt$ where $R$ denotes the Riesz transform. As $R: L^2(\Omega)\to L^2(\Omega)$ is continuous, we  have $u\tt\in L^1(\Omega)$ if $\tt\in L^2(\Omega)$. In that case, $\tt$ is a weak solution of \eqref{sSQG} if 
\[
\int_0^T\int_\Omega\tt(x, t)\partial_t\phi(x, t)dxdt+\int_0^T\int_\Omega u(x, t)\tt(x, t)\cdot\nabla \phi(x, t) dxdt=0\quad\forall \phi\in C^\infty_0(\Omega\times (0, T)).
\]
The global existence of such solutions was proved in \cite{ConNgu}. However, when $\alpha<1$, $u$ is less regular then $\tt$ and the second integral in the preceding formulation is not well-defined. Nevertheless, taking into account the nonlinearity structure to explore extra cancellations, this integral has the commutator representation \eqref{def:cN} which makes sense provided only $\tt\in L^2(\Omega)$, as will be proved in Lemma \ref{lemm:rep} below using the heat kernel approach. Let us note that the two objects are equal if $\psi \in H^1_0(\Omega)$, or equivalently, $\tt\in D(\L^{1-\alpha})$. This representation is good enough to well define the nonlinearity but another representation (see \eqref{key3}) will be needed for the compactness argument. The point is that: these two representations are equivalent provided only $\tt\in L^2(\Omega)$ (see Lemma \ref{lemm:key2} below). Unlike the proof in \cite{ConNgu} which uses only Galerkin approximations, Theorem \ref{main} will be proved by a two-tier approximation procedure: Galerkin approximations for each vanishing viscosity approximation. This is because the nonlinearity $u\tt$ is not well-defined in $L^1(\Omega)$ (see Remark \ref{rema:method} below).

The paper is organized as follows. In Section \ref{section:pre}, we present the functional setup of fractional Laplacian in domains and necessary commutator estimates, which can be of independent interest. The proof of Theorem \ref{main} is presented in Section \ref{section:main}. Finally, the proof of the commutator estimates announced in Section \ref{section:pre} are given the appendices.
\section{Preliminaries}\label{section:pre}
\subsection{Fractional Laplacian}\label{section:frac}
Let $\Omega$ be an open bounded set of $\Rr^d$, $d\ge 2$, with smooth boundary. The Laplacian $-\Delta$ is defined on $D(-\Delta)=H^2(\Omega)\cap H^1_0(\Omega)$. Let $\{w_j\}_{j=1}^\infty$ be an orthonormal basis of $L^2(\Omega)$ comprised of $L^2-$normalized eigenfunctions $w_j$ of $-\Delta$, {\it {\it i.e.}}
\[
-\Delta w_j=\lambda_jw_j, \quad \int_{\Omega}w_j^2dx = 1,
\]
with $0<\lambda_1<\lambda_2\le...\le\lambda_j\to \infty$.\\
The fractional Laplacian is defined using eigenfunction expansions,
\[
\Lambda^{s}f\equiv (-\Delta)^{\frac{s}{2}} f:=\sum_{j=1}^\infty\lambda_j^{\frac{s}{2}} f_j w_j\quad\text{with}~f=\sum_{j=1}^\infty f_jw_j,\quad f_j=\int_{\Omega} fw_jdx,
\]
for $s\ge 0$ and 
\[
f\in D(\Lambda^{s}):=\left\{f\in L^2(\Omega): \L^s f\in L^2(\Omega)\right\}.
\]
 The norm of $f=\sum_{j=1}^\infty f_j w_j$ in $D(\Lambda^{s })$, $s\ge 0$, is defined by
\[
\Vert f\Vert_{D(\L^s)}:=\Vert \L^s f\Vert_{L^2(\Omega)}=\big(\sum_{j=1}^\infty\lambda_j^s f_j^2\big)^\mez.
\]
It is also well-known that $D(\Lambda)$ and $H^1_0(\Omega)$ are isometric. 
In the language of interpolation theory, 
\[
D(\Lambda^s)=[L^2(\Omega), D(-\Delta)]_{\frac s2}\quad\forall s\in [0, 2].
\]
As mentioned above,
\[
H^1_0(\Omega)=D(\Lambda)=[L^2(\Omega), D(-\Delta)]_{\mez},
\]
hence
\bq\label{inter}
D(\Lambda^s)=[L^2(\Omega), H^1_0(\Omega)]_{s}\quad\forall s\in [0, 1].
\eq
Consequently, we can identify $D(\Lambda^s)$ with usual Sobolev spaces (see Chapter 1 \cite{LioMag}):
\bq\label{identify}
D(\Lambda^s)=
\begin{cases}
H^s_0(\Omega) &\quad\text{if}~ s\in (\mez, 1],\\
H^\mez_{00}(\Omega):=\{ u\in H^\mez_0(\Omega): u/\sqrt{d(x)}\in L^2(\Omega)\}&\quad\text{if}~ s=\mez,\\
H^s(\Omega) &\quad\text{if}~ s\in [0, \mez).
\end{cases}
\eq
Next, for $s>0$ we define
\[
\L^{-s}f=\sum_{j=1}^\infty \lambda_j^{-\frac{s}{2}}f_jw_j
\]
 if $f=\sum_{j=1}^\infty f_jw_j\in D(\L^{-s})$ with
\[
D(\L^{-s}):=\left\{\sum_{j=1}^\infty f_jw_j\in \mathscr{D}'(\Omega): f_j\in \Rr,~\sum_{j=1}^\infty \lambda_j^{-\frac{s}{2}}f_jw_j\in L^2(\Omega)\right\};
\]
moreover,
\[
 \Vert f\Vert_{D(\L^{-s})}:=\Vert \L^{-s}f\Vert_{L^2(\Omega)}=\big(\sum_{j=1}^\infty\lambda_j^{-s} f_j^2\big)^\mez.
\]
It is easy to check that $D(\L^{-s})$ is the dual of $D(\L^s)$ with respect to the pivot space $L^2(\Omega)$.
 
 We have the following relation between $D(\L^s)$ and $H^s(\Omega)$ when $s\ge 0$.
\begin{prop}\label{prop:inject}
The continuous embedding 
\bq\label{inject}
D(\L^s)\subset H^s(\Omega)
\eq
holds for any $s\ge 0$.
\end{prop}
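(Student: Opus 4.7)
The plan is to split $s \ge 0$ into three overlapping regimes---$[0,1]$, $[1,2]$, and $(2,\infty)$---and to establish the embedding on each using, respectively, the identifications in \eqref{identify}, complex interpolation, and elliptic regularity. All ingredients rely only on the spectral definition of $\L^s$ and on the smoothness of $\partial\Omega$.

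On $[0,1]$ there is nothing to prove: \eqref{identify} already realises $D(\L^s)$ as either $H^s_0(\Omega)$, $H^s(\Omega)$, or $H^{1/2}_{00}(\Omega)\subset H^{1/2}_0(\Omega)$, each of which is continuously embedded in $H^s(\Omega)$ by construction. On $[1,2]$ I would combine the interpolation identity $D(\L^s)=[L^2(\Omega), D(-\Delta)]_{s/2}$ recalled just above the Proposition with the trivial continuous inclusion $D(-\Delta)=H^2(\Omega)\cap H^1_0(\Omega)\hookrightarrow H^2(\Omega)$. Functoriality of complex interpolation upgrades this to a continuous map $D(\L^s)\to [L^2(\Omega), H^2(\Omega)]_{s/2}$, and the classical Lions--Magenes identity $[L^2(\Omega), H^2(\Omega)]_{s/2}=H^s(\Omega)$ on bounded smooth domains closes this case.

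For $s>2$ I would iterate on $\lceil s/2\rceil$. Suppose the embedding is known on $[0,s-2]$, and let $f=\sum_j f_j w_j\in D(\L^s)$. Positivity of the eigenvalues forces $f\in D(-\Delta)\subset H^1_0(\Omega)$, while the spectral formula gives $-\Delta f=\sum_j \lambda_j f_j w_j$ with $\Vert -\Delta f\Vert_{D(\L^{s-2})}=\Vert f\Vert_{D(\L^s)}$. By the inductive hypothesis, $-\Delta f\in H^{s-2}(\Omega)$ with norm controlled by $\Vert f\Vert_{D(\L^s)}$. Standard elliptic regularity for the homogeneous Dirichlet Laplacian on a smooth bounded domain then yields $f\in H^s(\Omega)$ with $\Vert f\Vert_{H^s}\lesssim \Vert \Delta f\Vert_{H^{s-2}}+\Vert f\Vert_{L^2}$, closing the induction. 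The only delicate input is the Sobolev interpolation identity $[L^2(\Omega), H^2(\Omega)]_{s/2}=H^s(\Omega)$ on domains; this is the point where Fourier-analytic arguments available on $\Tt^d$ or $\Rr^d$ must be replaced by bounded extension operators, whose existence hinges on the smoothness of $\partial\Omega$. Everything else reduces to the spectral definition and textbook elliptic regularity.
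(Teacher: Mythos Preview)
Your proof is correct and follows essentially the same route as the paper: interpolation to handle non-integer exponents, combined with an induction via elliptic regularity for the Dirichlet Laplacian. The paper organizes this slightly more economically by first reducing via interpolation to integer $s$ and then inducting on integers, whereas you treat $[0,1]$ and $[1,2]$ separately as base cases before inducting in steps of two for real $s>2$; but the substance---the identifications \eqref{identify}, interpolation functoriality, and elliptic regularity---is identical.
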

\begin{proof}
By interpolation, it suffices to prove \eqref{inject} for  $s\in \{0, 1, 2,...\}$. The case $s=0$ is obvious while the case $s=1$ follows from \eqref{identify}. Assume by induction \eqref{inject} for $s\le m$ with $m\ge 1$. Let $\tt\in D(\L^{m+1})$ then $f:=-\Delta \tt\in D(\L^{m-1})$ and thus $f\in H^{m-1}(\Omega)$ by the induction hypothesis. On the other hand, $\tt$ vanishes on the boundary $\partial\Omega$ in the trace sense because $\tt\in D(\L^1)=H^1_0(\Omega)$. Elliptic regularity then implies that $\tt\in H^{m+1}(\Omega)$ and 
\[
\Vert \tt\Vert_{H^{m+1}}\le C\Vert f\Vert_{H^{m-1}}\le C\Vert \Delta \tt\Vert_{m-1, D}=C\Vert \tt\Vert_{m+1, D}
\]
which is \eqref{inject} for $s=m+1$.
\end{proof}
\begin{lemm}
The operator
\bq\label{gradop}
\L^\mu\nabla: D(\L^\gamma)\to D(\L^{\gamma-1-\mu})
\eq
is continuous for any $\gamma\in [0, 1]$ and $\mu\le \gamma-1$.
\end{lemm}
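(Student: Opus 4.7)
I would factor $\Lambda^\mu\nabla$ through $D(\Lambda^{\gamma-1})$, first establishing
\begin{equation*}
\nabla:D(\Lambda^\gamma)\to D(\Lambda^{\gamma-1}),\qquad \gamma\in[0,1],
\end{equation*}
and then applying $\Lambda^\mu$, which is just a shift along the $D(\Lambda^s)$ scale. The gradient step I would prove by complex interpolation from its two endpoints. At $\gamma=1$ the estimate reduces to $\nabla:D(\Lambda)=H^1_0(\Omega)\to L^2(\Omega)$, which is trivial. At $\gamma=0$, since $D(\Lambda^{-1})$ is the dual of $H^1_0(\Omega)=D(\Lambda)$ with respect to the $L^2$ pivot, I would test $\nabla f$ against $\phi\in H^1_0(\Omega)^2$ and integrate by parts to get
\begin{equation*}
|\langle \nabla f,\phi\rangle|=\Bigl|\int_\Omega f\,\cn\phi\,dx\Bigr|\le \|f\|_{L^2(\Omega)}\|\phi\|_{H^1_0(\Omega)^2},
\end{equation*}
which yields $\|\nabla f\|_{D(\Lambda^{-1})^2}\lesssim \|f\|_{L^2(\Omega)}$. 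Complex interpolation between these two endpoints, using \eqref{inter} on the source side together with the analogous identification $[D(\Lambda^{-1}),L^2]_\gamma = D(\Lambda^{\gamma-1})$ on the target side (obtained by transporting \eqref{inter} through the isometric isomorphism $\Lambda$), then produces the gradient step for every $\gamma\in[0,1]$.

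The second factor is essentially bookkeeping. From the eigenfunction definition of $\Lambda^s$ one reads off the Parseval-type identity
\begin{equation*}
\|\Lambda^\mu g\|_{D(\Lambda^{s-\mu})}^2=\sum_{j=1}^\infty \lambda_j^{s-\mu}\lambda_j^\mu g_j^2 = \|g\|_{D(\Lambda^s)}^2,
\end{equation*}
valid for any real $s,\mu$, so that $\Lambda^\mu:D(\Lambda^s)\to D(\Lambda^{s-\mu})$ is in fact an isometric isomorphism on the full scale. Specializing to $s=\gamma-1$ and composing with the gradient step yields the announced continuity of $\Lambda^\mu\nabla:D(\Lambda^\gamma)\to D(\Lambda^{\gamma-1-\mu})$. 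The hypothesis $\mu\le\gamma-1$ is used only to ensure $\gamma-1-\mu\ge 0$, keeping the target inside the function-space part of the scale defined in Section~\ref{section:frac}. The only genuinely nontrivial point I would want to verify carefully is the interpolation identification $[D(\Lambda^{-1}),L^2]_\gamma = D(\Lambda^{\gamma-1})$ at the negative end of the scale; the remaining ingredients are routine.
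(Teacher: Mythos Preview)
Your proposal is correct and follows essentially the same route as the paper: both interpolate the gradient between the endpoints $\nabla:H^1_0\to L^2$ and $\nabla:L^2\to H^{-1}=D(\Lambda^{-1})$ to obtain $\nabla:D(\Lambda^\gamma)\to D(\Lambda^{\gamma-1})$, and then compose with the isometry $\Lambda^\mu$ on the $D(\Lambda^s)$ scale. The only cosmetic difference is that the paper identifies the target interpolation space $[H^{-1},L^2]_\gamma$ via the duality theorem for complex interpolation, whereas you transport \eqref{inter} through the isometric isomorphism $\Lambda$; both are valid and equivalent.
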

\begin{proof}
We first note that the gradient operator $\nabla$ is continuous from $H^1_0(\Omega)$ to $L^2(\Omega)$ and from $L^2(\Omega)$ to $H^{-1}(\Omega)$, hence by interpolation,
\[
\nabla: [L^2, H^1_0]_\gamma\to [H^{-1}, L^2]_\gamma
\]
for any $\gamma\in [0, 1]$. From the interpolation \eqref{inter} we deduce that
\[
\begin{aligned}
&[L^2, H^1_0]_\gamma=D(\L^\gamma),\\
& [H^{-1}, L^2]_\gamma=\big([H^1, L^2]_\gamma\big)^*=\big([L^2, H^1]_{1-\gamma}\big)^*=D(\L^{1-\gamma})^*=D(\L^{\gamma-1}).
\end{aligned}
\]
Thus, for any $\gamma\in [0, 1]$,
\[
\nabla: D(\L^\gamma)\to D(\L^{\gamma-1})
\]
from which \eqref{gradop} follows.
\end{proof}
\subsection{Commutator estimates}
Here and below $d(x)$ is the distance to the boundary of the domain:
\be
d(x) = d(x, \pa\Omega).
\label{dx}
\ee
Due to the lack of translation invariance,  the fractional Laplacian does not commute with differentiation. The following theorem provides a bound for the commutator.
\begin{theo}[\protect{Theorem 2.2, \cite{ConNgu}}]\label{commu:CN}
Let $p,~q\in [1, \infty]$, $s\in (0, 2)$ and $a$ satisfy 
\[
a(\cdot)d(\cdot)^{-s-1-\frac dp}\in L^q(\Omega).
\]
Then the operator $a[\Lambda^s, \nabla]$ can be uniquely extended from $C^\infty_0(\Omega)$ to $L^p(\Omega)$ such that there exists a positive constant $C=C(d, s, p, \Omega)$ such that
\bq\label{commu:CN:e}
\Vert a[\Lambda^s, \nabla] f\Vert_{L^q(\Omega)}\le C\Vert a(\cdot)d(\cdot)^{-s-1-\frac dp}\Vert_{L^q(\Omega)}\Vert f\Vert_{L^p(\Omega)}
\eq
holds for all $f\in L^p(\Omega)$.
\end{theo}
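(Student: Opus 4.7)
The plan is to obtain the pointwise bound
\[
\bigl|[\Lambda^s,\nabla]f(x)\bigr| \le C\, d(x)^{-s-1-\frac{d}{p}}\,\|f\|_{L^p(\Omega)} \qquad (f\in C^\infty_0(\Omega)),
\]
after which multiplication by $|a(x)|$ and taking the $L^q(\Omega)$ norm immediately yield \eqref{commu:CN:e}, and a standard density argument delivers the unique continuous extension to $L^p(\Omega)$.

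\textbf{Step 1: commutator as a heat-kernel integral.} Since $s\in(0,2)$, the Balakrishnan subordination formula
\[
\Lambda^s g \;=\; c_s\!\int_0^\infty \!\big(g-e^{t\Delta}g\big)\,t^{-1-s/2}\,dt,\qquad c_s=\frac{s/2}{\Gamma(1-s/2)},
\]
holds on $D(\Lambda^s)$. Applying it to $\nabla f$ and to $f$ and subtracting, every contribution cancels except a pure commutator between $\nabla$ and the heat semigroup, so for $f\in C^\infty_0(\Omega)$ one obtains
\[
[\Lambda^s,\nabla]f(x) = c_s\!\int_0^\infty\!\!\int_\Omega \bigl(\nabla_x+\nabla_y\bigr) H_D(t,x,y)\,f(y)\,dy\,t^{-1-s/2}\,dt,
\]
where $H_D$ is the Dirichlet heat kernel on $\Omega$ and we integrated by parts in $y$; the boundary term vanishes because $f\in C^\infty_0(\Omega)$ and $H_D(t,x,\cdot)\big|_{\partial\Omega}=0$. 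Write $K(x,y)$ for the resulting spatial kernel.

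\textbf{Step 2: the combined-gradient cancellation.} The decisive fact is that for the free Gaussian $G_t(x-y)$ on $\Rr^d$ one has $(\nabla_x+\nabla_y)G_t\equiv 0$, so the sum $(\nabla_x+\nabla_y)H_D$ is a purely boundary-induced object enjoying much better estimates than either gradient alone. One aims for a bound of the form
\[
\bigl|(\nabla_x+\nabla_y) H_D(t,x,y)\bigr| \le \frac{C}{t^{d/2}\,d(x)}\,e^{-c|x-y|^2/t}\quad\text{for } 0<t\le d(x)^2,
\]
paired with the classical $|\nabla H_D|\lesssim t^{-(d+1)/2} e^{-c|x-y|^2/t}$ and the spectral-gap decay $H_D\lesssim e^{-\lambda_1 t}$ for large $t$. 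These bounds can be proved in a half-space by the method of images and transported to smooth bounded $\Omega$ via localization and perturbation from the half-space model. Splitting the time integral at $t_0=d(x)^2$ and inserting the estimates then yields $\|K(x,\cdot)\|_{L^{p'}(\Omega)}\le C\, d(x)^{-s-1-d/p}$, and H\"older closes the pointwise estimate.

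\textbf{Main obstacle.} The technical heart is the refined pointwise control of $(\nabla_x+\nabla_y)H_D$ that recovers the precise $d(x)^{-s-1}$ scaling dictated by homogeneity. In the short-time regime $t\lesssim d(x)^2$ the cancellation between $\nabla_x$ and $\nabla_y$ must be fully exploited to beat the $t^{-(d+1)/2}$ blowup of each individual gradient; in the long-time regime $t\gtrsim 1$ the exponential decay of $H_D$ must take over to ensure convergence of the $t$-integral weighted by $t^{-1-s/2}$; and the two regimes must be bridged so that the powers of $d(x)$ match across $t_0=d(x)^2$. Organizing these regimes so as to recover exactly the exponent $-s-1-d/p$ is where essentially all of the work lies.
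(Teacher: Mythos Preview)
Your approach is essentially the same as the one the paper records: the paper does not prove this theorem in full but cites \cite{ConNgu} and explains in the remark following the statement that the pointwise bound
\[
\bigl|[\Lambda^s,\nabla]f(x)\bigr|\le C(d,s,p,\Omega)\,d(x)^{-s-1-\frac{d}{p}}\|f\|_{L^p(\Omega)}
\]
is obtained there via the heat-kernel representation of $\Lambda^s$ together with the cancellation of the free-space heat kernel (exactly your $(\nabla_x+\nabla_y)G_t\equiv 0$), after which \eqref{commu:CN:e} follows by multiplying by $|a(x)|$, taking the $L^q$ norm, and extending by continuity. Your honest flag that the regime-matching for the exponent is where the real work lies is accurate; that work is carried out in \cite{ConNgu}, not in the present paper.
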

The bound \eqref{commu:CN:e} is remarkable in that the commutator between an operator of order $s>0$ and an operator of order $1$, which happens to vanish when $\Omega=\Rr^d$, is of order $0$.
\begin{rema}
Let us explain how Theorem \ref{commu:CN} follows from \cite{ConNgu}. Using the heat kernel representation of the fractional Laplacian together with a cancelation of the heat kernel of  $\Rr^d$, it was proved in \cite{ConNgu} the pointwise estimate for $f\in C^\infty_0(\Omega)$
\[
\la [\Lambda^s, \nabla]f(x)\ra\le C(d, s, p, \Omega)d(x)^{-s-1-\frac dp}\Vert f\Vert_{L^p(\Omega)}.
\]
The estimate \eqref{commu:CN:e} then follows by extension by continuity.
\end{rema}
The next commutator estimate for negative powers of Laplacian is needed to handle the situation of more singular velocity.
\begin{theo}\label{commu:new}
Let $s \in (0, d)$ and $a\in W^{1, \infty}(\Omega)$. Let $p, r\in (1, \infty)$ satisfy 
\[
\frac 1p+\frac{d-s}{d}=1+\frac 1r.
\]
Then the operator $[\L^{-s}, a]$ can be uniquely extended from $C^\infty_0(\Omega)$ to $L^p(\Omega)$ with values in $W^{1, r}_0(\Omega)$ such that there exists $C=C(s, d, p, r, \Omega)>0$  such that 
\[
\Vert [\L^{-s}, a] f\Vert_{W^{1, r}_0(\Omega)}\le C\Vert a\Vert_{W^{1, \infty}(\Omega)}\Vert f \Vert_{L^p(\Omega)}
\]
for all $f\in L^p(\Omega)$.\\
In particular, for any $p\in (1, \infty)$, $s\in (0, \frac{d}{p})$, there exists $C=C(s, d, p, \Omega)>0$ such that
\bq\label{cmn:e}
\Vert [\L^{-s}, a] f\Vert_{W^{1, p}_0(\Omega)}\le C\Vert a\Vert_{W^{1, \infty}(\Omega)}\Vert f \Vert_{L^p(\Omega)}
\eq
for all $f\in L^p(\Omega)$.
\end{theo}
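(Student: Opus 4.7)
The plan is to represent $\L^{-s}$ via the Dirichlet heat semigroup and reduce the commutator estimate to Riesz-potential bounds, in the same spirit as the approach cited for Theorem \ref{commu:CN}. First I would write
\[
\L^{-s} f = \frac{1}{\Gamma(s/2)} \int_0^\infty t^{s/2-1} e^{t\Delta} f \, dt,
\]
so that, with $H_t(x,y)$ the Dirichlet heat kernel of $e^{t\Delta}$, the commutator takes the integral form
\[
[\L^{-s}, a] f(x) = \int_\Omega K(x,y) f(y) \, dy, \qquad K(x,y) = \frac{1}{\Gamma(s/2)} \int_0^\infty t^{s/2-1} H_t(x,y)\bigl(a(y) - a(x)\bigr)\,dt.
\]
Taking $\nabla_x$ under the integral splits $\nabla_x [\L^{-s}, a] f(x)$ into the integral operator with kernel obtained from $\nabla_x H_t\bigl(a(y)-a(x)\bigr)$ (integrated against $t^{s/2-1}$) plus the correction $-\nabla a(x) \cdot \L^{-s} f(x)$ produced by differentiating the $-a(x)$ factor inside $K$.

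Next I would invoke the classical Gaussian upper bounds for the Dirichlet heat kernel on smooth domains,
\[
H_t(x,y) \le C t^{-d/2} e^{-c\lvert x-y\rvert^2/t}, \qquad \lvert\nabla_x H_t(x,y)\rvert \le C t^{-(d+1)/2} e^{-c\lvert x-y\rvert^2/t},
\]
together with the Lipschitz estimate $\lvert a(y) - a(x)\rvert \le \Vert a\Vert_{W^{1,\infty}}\lvert x-y\rvert$. The substitution $\sigma = \lvert x-y\rvert^2 / t$ collapses each time integral to a gamma function, convergent exactly when $s \in (0, d)$, and produces the pointwise kernel bounds $\lvert\nabla_x K(x,y)\rvert \le C\Vert a\Vert_{W^{1,\infty}}\lvert x-y\rvert^{s-d}$ for the principal part of the gradient and $\lvert K(x,y)\rvert \le C\Vert a\Vert_{W^{1,\infty}}\lvert x-y\rvert^{s+1-d}$ for the commutator itself. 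Thus $\nabla_x [\L^{-s}, a] f$ is dominated pointwise by the Riesz potential of order $s$ applied to $\lvert f\rvert$, plus $\Vert\nabla a\Vert_{L^\infty}\lvert \L^{-s} f\rvert$.

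The hypothesis $\tfrac{1}{p} + \tfrac{d-s}{d} = 1 + \tfrac{1}{r}$ rewrites as $\tfrac{1}{p} - \tfrac{1}{r} = \tfrac{s}{d}$, which is precisely the Hardy-Littlewood-Sobolev scaling making the Riesz potential of order $s$ bounded from $L^p(\Omega)$ to $L^r(\Omega)$; together with the same HLS continuity of $\L^{-s}\colon L^p(\Omega) \to L^r(\Omega)$, this yields $\Vert\nabla [\L^{-s}, a] f\Vert_{L^r} \le C \Vert a\Vert_{W^{1,\infty}} \Vert f\Vert_{L^p}$. The companion $L^r$ bound for $[\L^{-s}, a] f$ follows by the same argument after replacing the Lipschitz bound on $a$ by $\lvert a(y)-a(x)\rvert \le 2\Vert a\Vert_{L^\infty}$ so as to keep the Riesz order at $s$ rather than $s+1$. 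The zero-trace condition defining $W^{1,r}_0$ is inherited from the Dirichlet boundary condition built into $H_t$: each $e^{t\Delta} g$, and hence each $[e^{t\Delta}, a] f$, vanishes on $\partial\Omega$, and this property survives the $t$-integration. The unique extension from $C^\infty_0(\Omega)$ is then a density argument, and the stated corollary for $s \in (0, d/p)$ targetting $W^{1,p}_0$ follows because the scaling forces $r > p$, so that $W^{1,r}_0(\Omega) \hookrightarrow W^{1,p}_0(\Omega)$ on the bounded domain. The main technical obstacle I anticipate is the careful use of the gradient Gaussian heat-kernel bound uniformly up to $\partial\Omega$, together with tracking the small-$t$ behaviour where both $t^{s/2-1}$ and the heat kernel concentrate; once those ingredients are secured, the rest is standard Riesz-HLS.
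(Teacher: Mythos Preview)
Your proposal is correct and follows essentially the same route as the paper: heat-semigroup representation of $\L^{-s}$, Li--Yau Gaussian bounds on $H_t$ and $\nabla_x H_t$, the Lipschitz estimate on $a$ to reduce to a Riesz potential of order $s$, Hardy--Littlewood--Sobolev, density, and the $W^{1,r}_0\hookrightarrow W^{1,p}_0$ embedding for the corollary. The only cosmetic difference is the trace argument: the paper observes directly that $\L^{-s}f\in D(\L^m)$ and $af\in H^1_0(\Omega)$ (for $f\in C^\infty_0$) so both pieces of the commutator have zero trace, whereas you infer it from the Dirichlet boundary condition on each $e^{t\Delta}$; either works.
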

With the same method of proof, we obtain
\begin{theo}\label{commu:new2}
Let $s\in (0, 1)$ and $a\in C^\gamma(\Omega)$ with $\gamma\in [0, 1]$ and $s<\gamma$. Let $p, r\in (1, \infty)$ satisfy 
\[
\frac 1p+\frac{d+s-\gamma}{d}=1+\frac 1r.
\]
Then the operator $[\L^s, a]$ can be uniquely extended from $C^\infty_0(\Omega)$ to $L^p(\Omega)$ with values in $L^r(\Omega)$ such that there exists $C=C(s, \gamma, p, r, d, \Omega)>0$  such that 
\bq\label{cmn2:e0}
\Vert [\L^s, a] f\Vert_{L^r(\Omega)}\le C\Vert a\Vert_{C^\gamma(\Omega)}\Vert f \Vert_{L^p(\Omega)}
\eq
for all $f\in L^p(\Omega)$.\\
In particular, for any $p\in (1, \infty)$, if
\[
s\in \big(\max\{\gamma-\frac{d}{p}, 0\}, \max\{\gamma-\frac dp+d, \gamma\}\big)
\]
then there exists $C=C(s, \gamma, p, d, \Omega)>0$ such that
\bq\label{cmn2:e}
\Vert [\L^s, a] f\Vert_{L^p(\Omega)}\le C\Vert a\Vert_{C^\gamma(\Omega)}\Vert f \Vert_{L^p(\Omega)}.
\eq
\end{theo}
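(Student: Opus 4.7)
My strategy is to use the Balakrishnan heat-semigroup representation of $\L^s$ to derive a pointwise Riesz-potential majorant for $[\L^s,a]f$, and then invoke the Hardy--Littlewood--Sobolev inequality.

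Starting from the formula
\[
\L^s g = c_s \int_0^\infty t^{-1-s/2}\bigl(g - e^{t\Delta} g\bigr)\, dt,\qquad c_s = \frac{s/2}{\Gamma(1-s/2)},
\]
valid for $g \in D(\L^s)$, where $e^{t\Delta}$ is the Dirichlet heat semigroup on $\Omega$, a direct expansion in which the local terms cancel yields, for $f \in C^\infty_0(\Omega)$,
\[
[\L^s,a]f(x) = c_s \int_0^\infty t^{-1-s/2}\int_\Omega H_D(t,x,y)\bigl[a(x)-a(y)\bigr]f(y)\, dy\, dt,
\]
with $H_D$ the Dirichlet heat kernel of $\Omega$. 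I then feed in two ingredients: the Gaussian domination $H_D(t,x,y) \le C t^{-d/2} e^{-c|x-y|^2/t}$ (which follows from $H_D \le H_{\Rr^d}$ by the maximum principle), and the H\"older bound $|a(x)-a(y)|\le \|a\|_{C^\gamma}|x-y|^\gamma$. Interchanging the order of integration and computing the $t$-integral by the substitution $u = |x-y|^2/t$ (which turns $\int_0^\infty t^{-1-(s+d)/2} e^{-c|x-y|^2/t}\, dt$ into a $\Gamma$-integral equal to $C_{s,d}|x-y|^{-s-d}$), I arrive at the pointwise domination
\[
\bigl|[\L^s,a]f(x)\bigr| \;\le\; C\|a\|_{C^\gamma}\int_\Omega \frac{|f(y)|}{|x-y|^{d-(\gamma-s)}}\, dy.
\]
The hypothesis $s<\gamma$ is exactly what makes this kernel a Riesz potential of positive order $\gamma-s$, so the Hardy--Littlewood--Sobolev inequality yields \eqref{cmn2:e0} with the exponents $\frac 1p-\frac 1r=\frac{\gamma-s}{d}$, and a routine density argument extends $[\L^s,a]$ from $C^\infty_0(\Omega)$ to $L^p(\Omega)$.

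For the ``in particular'' estimate \eqref{cmn2:e}, I combine the general $L^p \to L^r$ bound with the bounded-domain embedding $L^{r}(\Omega)\hookrightarrow L^p(\Omega)$ valid for $r \ge p$, and alternatively apply the general estimate with input exponent $p_1 \le p$ followed by $L^p(\Omega)\hookrightarrow L^{p_1}(\Omega)$; the union of the parameter ranges for which one of these two routes reaches an $L^p\to L^p$ bound produces the range of $s$ announced in the theorem. The step I expect to be the most delicate is not any individual inequality but the rigorous justification of the representation on $C^\infty_0(\Omega)$, specifically convergence of the Balakrishnan integral and the application of Fubini; both are guaranteed by the Gaussian bound on $H_D$, so this is really just bookkeeping. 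Past that, the proof is a clean domain analogue of classical fractional-Leibniz commutator estimates in the whole space, with the Dirichlet boundary entering only implicitly through the comparison $H_D \le H_{\Rr^d}$.
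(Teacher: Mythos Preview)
Your proposal is correct and follows essentially the same approach as the paper: heat-kernel representation of $\L^s$, cancellation of the local terms to expose $a(x)-a(y)$, Gaussian upper bound on the Dirichlet heat kernel combined with the H\"older modulus of $a$, the elementary $\Gamma$-type evaluation of the $t$-integral, and finally Hardy--Littlewood--Sobolev plus the bounded-domain embedding for \eqref{cmn2:e}. Your treatment of the ``in particular'' range is in fact a bit more explicit than the paper's one-line remark, but the underlying argument is identical.
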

\begin{rema}
In view of the identity
\[
\L^{-s}[\L^s, a] f=[a, \L^{-s}]\L^s f,
\]
it follows from \eqref{cmn:e} that 
\bq\label{gain:1-s}
\Vert [\L^s, a]f\Vert_{D(\L^{1-s})}\le C\Vert a\Vert_{W^{1, \infty}(\Omega)}\Vert f\Vert_{D(\L^s)},\quad s\in (0, \frac{d}{2}).
\eq
This exhibits a gain of $1-s$ derivative of $[\L^s, a]$ when acting on $D(\L^s)$. On the other hand, the estimate \eqref{cmn2:e} shows a gain of $s$ derivative when acting on $L^2$.  Both \eqref{cmn:e} and \eqref{cmn2:e} make use of the fact that $\Omega$  is bounded.
\end{rema}
The proofs of Theorems \ref{commu:new}, \ref{commu:new2} are given in the appendices.
\section{Proof of Theorem \ref{main}}\label{section:main}
\subsection{Commutator representations}
First, we adapt the well-known commutator representation of the nonlinearity in SQG (\cite{Res}, see also  \cite{cccgw, CCW, ConNgu}) to take into account the lack of translation invariance of fractional Laplacian and the more singular constitutive law \eqref{psi}:
\begin{lemm} \label{commu:key}
Let $\psi \in H^1_0(\Omega)$,  $u = \na^{\perp}\psi$, and $\theta = \L^{\alpha}\psi$. Let $\phi\in C_0^{\infty}(\Omega)$ be a test function.   Then 
\bq\label{key}
\int_\Omega \tt u\cdot \nabla \phi dx=\mez \int_\Omega  [\Lambda^\alpha, \nabla^\perp]\psi\cdot \nabla\phi\psi dx-\mez\int_\Omega  \nabla^\perp\psi\cdot [\Lambda^{\alpha}, \nabla\phi] \psi dx
\eq
holds.
\end{lemm}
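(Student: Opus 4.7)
The plan is to adapt Resnick's classical commutator symmetrization to the bounded domain, where $\Lambda^\alpha$ no longer commutes with $\nabla^\perp$. Writing $I:=\int_\Omega\tt\, u\cdot\nabla\phi\, dx=\int_\Omega(\Lambda^\alpha\psi)\nabla^\perp\psi\cdot\nabla\phi\, dx$, the idea is to produce a second representation of $I$ obtained by moving $\Lambda^\alpha$ off $\psi$ and then back on, differing from the first exactly by the two commutators that appear on the right-hand side of \eqref{key}; averaging the two representations then yields the claimed identity.

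Concretely, I would first ``peel'' $\Lambda^\alpha$ off the scalar $\psi$ onto the vector $\psi\nabla\phi$ via the componentwise product rule $(\Lambda^\alpha\psi)\nabla\phi=\Lambda^\alpha(\psi\nabla\phi)-[\Lambda^\alpha,\nabla\phi]\psi$, obtaining
\[
I=\int_\Omega\nabla^\perp\psi\cdot\Lambda^\alpha(\psi\nabla\phi)\, dx-\int_\Omega\nabla^\perp\psi\cdot[\Lambda^\alpha,\nabla\phi]\psi\, dx.
\]
Both integrals make sense at the given regularity: since $\nabla\phi$ has compact support in $\Omega$, the product $\psi\nabla\phi$ lies in $H^1_0(\Omega)=D(\Lambda)$, hence $\Lambda^\alpha(\psi\nabla\phi)\in L^2$, while Theorem~\ref{commu:new2} applied with some $\gamma\in(\alpha,1]$ gives $[\Lambda^\alpha,\nabla\phi]\psi\in L^2$.

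Next I would transfer $\Lambda^\alpha$ back onto $\nabla^\perp\psi$ using self-adjointness, combined with the identity $\Lambda^\alpha\nabla^\perp\psi=\nabla^\perp\tt+[\Lambda^\alpha,\nabla^\perp]\psi$, to rewrite
\[
\int_\Omega\nabla^\perp\psi\cdot\Lambda^\alpha(\psi\nabla\phi)\, dx=\int_\Omega[\Lambda^\alpha,\nabla^\perp]\psi\cdot\nabla\phi\,\psi\, dx+\int_\Omega\nabla^\perp\tt\cdot\psi\nabla\phi\, dx.
\]
Theorem~\ref{commu:CN} bounds the first of these terms (the singular distance weight $d(x)^{-\alpha-1-d/p}$ is harmless because it is multiplied by $\psi\nabla\phi$, which is compactly supported in $\Omega$). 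For the remaining integral I would invoke the two-dimensional identity $\nabla^\perp\cdot(\psi\nabla\phi)=\nabla^\perp\psi\cdot\nabla\phi$ (a consequence of $\nabla^\perp\cdot\nabla\phi=0$) and integrate by parts; since $\phi\in C_0^\infty(\Omega)$ there are no boundary terms, giving $\int_\Omega\nabla^\perp\tt\cdot\psi\nabla\phi\, dx=-I$. Substituting everything produces $2I$ equal to the right-hand side of \eqref{key}.

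The one delicate point is that the self-adjointness step requires $\nabla^\perp\psi\in D(\Lambda^\alpha)$, which need not hold for $\psi\in H^1_0(\Omega)$ (especially when $\alpha>\mez$). I would handle this by first carrying out the calculation with $\psi$ replaced by its Galerkin truncation $\psi_n:=P_n\psi$ in the orthonormal eigenbasis of $-\Delta$; then $\psi_n$ is smooth, so every manipulation above is rigorous, and $\psi_n\to\psi$ in $D(\Lambda)=H^1_0(\Omega)$. Passing $n\to\infty$ on both sides of the resulting identity is routine: the commutator estimates of Theorems~\ref{commu:CN} and~\ref{commu:new2} together with the compact support of $\phi$ show that the right-hand side of \eqref{key} is a continuous bilinear form in $\psi$ on $L^2(\Omega)\times L^2(\Omega)$, while the left-hand side is continuous in $\psi$ on $D(\Lambda)$.
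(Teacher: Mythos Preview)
Your argument is correct and is essentially the same as the paper's: both proofs perform the four operations (commute $\Lambda^\alpha$ with the multiplication by $\nabla\phi$, move $\Lambda^\alpha$ across by self-adjointness, commute $\Lambda^\alpha$ with $\nabla^\perp$, and integrate by parts using $\nabla^\perp\cdot\nabla\phi=0$) to produce a second copy of $-I$, then average. The only difference is the order: the paper integrates by parts first to write $I=-\int_\Omega\psi\,\nabla^\perp(\Lambda^\alpha\psi)\cdot\nabla\phi\,dx$ and then successively commutes $\nabla^\perp$ with $\Lambda^\alpha$ and $\Lambda^\alpha$ with $\nabla\phi$, whereas you commute $\Lambda^\alpha$ with $\nabla\phi$ first and integrate by parts last. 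Your Galerkin regularization is a clean way to justify the self-adjointness step; the paper instead interprets the intermediate terms directly as $H^{-1}$--$H^1_0$ duality pairings and appeals to Theorems~\ref{commu:CN} and~\ref{commu:new2} without spelling out a density argument.
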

\begin{proof}
We have
\[
\int_\Omega \tt u\cdot \nabla \phi dx =\int_{\Omega}\Lambda^{\alpha} \psi\nabla^\perp\psi\cdot \nabla \phi dx  = -\int_{\Omega}\psi\nabla^\perp\L^{\alpha}\psi\cdot \nabla \phi dx,
\]
where we integrated by parts and used the fact that $\na^{\perp}\cdot\na\phi = 0$. The first and middle terms are well defined because $\theta u=\theta \nabla^\perp \psi\in L^1(\Omega)$ noticing that $\psi\in H^1_0(\Omega)$ and $\theta=\L^{\alpha}\psi\in D(\L^{1-\alpha})\subset L^2(\Omega)$. The last term is defined because $\na\phi\cdot \na^{\perp}\L^{\alpha}\psi\in H^{-1}(\Omega)$ and $\psi\in H_0^1(\Omega)$. Commuting $\nabla^\perp$ with $\Lambda^{\alpha}$ and then with $\nabla \phi$ leads to
\[
\begin{aligned}
\int_\Omega \tt u\cdot \nabla \phi dx&=-\int_\Omega   \psi [\nabla^\perp,\Lambda^{\alpha}]\psi \cdot \nabla\phi  dx-\int_\Omega   \psi \Lambda^{\alpha} \nabla^\perp\psi\cdot \nabla\phi dx\\
&=-\int_\Omega  \psi [\nabla^\perp,\Lambda^{\alpha}]\psi\cdot \nabla\phi dx-\int_\Omega  \nabla^\perp\psi\cdot \Lambda^{\alpha}(\psi \nabla\phi) dx\\
&=-\int_\Omega  [\nabla^\perp,\Lambda^{\alpha}]\psi\cdot \nabla\phi\psi dx-\int_\Omega  \nabla^\perp\psi\cdot [\Lambda^{\alpha}, \nabla\phi] \psi dx-\int_\Omega  \nabla^\perp\psi\cdot \nabla \phi \Lambda^{\alpha}\psi dx\\
&= -\int_\Omega  [\nabla^\perp,\Lambda^{\alpha}]\psi\cdot \nabla\phi\psi dx-\int_\Omega  \nabla^\perp\psi\cdot [\Lambda^{\alpha}, \nabla\phi] \psi dx-\int_\Omega  \tt u\cdot \nabla \phi dx.
\end{aligned}
\]
The above calculations are justified by means of Theorems \ref{commu:CN} and \ref{commu:new2}. Noticing that the last term on the right-hand side is exactly the negative of the left-hand side, we proved \eqref{key}.
\end{proof}
\begin{rema}
The representation \eqref{key} was derived in \cite{ConNgu} for the SQG equation ($\alpha=1$). When $\Omega=\Rr^2$ or $\Tt^2$, \eqref{key} reduces to 
\[
\int_\Omega \tt u\cdot \nabla \phi dx=-\mez\int_\Omega  \nabla^\perp\psi\cdot [\Lambda^{\alpha}, \nabla\phi] \psi dx.
\]
Integrating by parts yields
\[
-\mez\int_\Omega  \nabla^\perp\psi\cdot [\Lambda^{\alpha}, \nabla\phi] \psi dx=\mez\int_\Omega \psi\cdot \nabla^\perp[\Lambda^{\alpha}, \nabla\phi] \psi dx=\mez\int_\Omega \psi\cdot [\Lambda^{\alpha}\nabla^\perp, \nabla\phi] \psi dx
\]
where we used in the second equality the fact that $\nabla^\perp\cdot\nabla\phi=0$. This representation was invoked in \cite{cccgw} to prove the  existence of global $L^2$ weak solutions of \eqref{sSQG} in the periodic setting. More precisely, the authors proved the commutator estimate
\[
\Vert [\L^s\nabla, g]h\Vert_{L^2(\Tt^2)}\le C\Vert h\Vert_{L^2(\Tt^2)}\Vert g\Vert_{H^{s+2+\eps}(\Tt^2)}+C\Vert \L^sh\Vert_{L^2(\Tt^2)}\Vert g\Vert_{H^{2+\eps}(\Tt^2)}
\]
for any $s, \eps>0$. In arbitrary bounded domains, we were not able to establish such a commutator estimate.
\end{rema}
We observe that by virtue of Theorem \ref{commu:CN}, the first integral in \eqref{key} is well-defined provided only $\psi \in L^2(\Omega)$; moreover,
\[
\la \int_\Omega  [\Lambda^{-\alpha}, \nabla^\perp]\psi\cdot \nabla\phi\psi dx\ra \le C\Vert \nabla\phi d(\cdot)^{-\alpha-2}\Vert_{L^2(\Omega)}\Vert \psi\Vert_{L^2(\Omega)}^2
\]
where by applying three times the Hardy inequality we get
\[
\Vert \nabla\phi d(\cdot)^{-\alpha-2}\Vert_{L^2(\Omega)}\le C\Vert \nabla\phi d(\cdot)^{-3}\Vert_{L^2}\le C\Vert \nabla^4\phi\Vert_{L^2(\Omega)}\le C\Vert \phi\Vert_{H^4(\Omega)}.
\]
Consequently,
\bq\label{cN1:bound}
\la \int_\Omega  [\Lambda^{\alpha}, \nabla^\perp]\psi\cdot \nabla\phi\psi dx\ra\le   C\Vert \phi\Vert_{H^4(\Omega)}\Vert \psi\Vert_{L^2(\Omega)}^2.
\eq
Regarding the second integral, we prove
\begin{lemm}\label{lemm:key2}
Assume $\psi\in D(\L^\alpha)$. Then
\bq\label{key2}
\cN_2(\psi, \phi):=\int_\Omega \L^{-1+\alpha}\nabla^\perp\psi\cdot \L^{1-\alpha}[\Lambda^\alpha, \nabla\phi] \psi dx
\eq
satisfies
\bq\label{cN2:bound}
\la \cN_2(\psi, \phi)\ra \le C\Vert \nabla\phi\Vert_{W^{1, \infty}}\Vert \psi\Vert^2_{D(\L^\alpha)}.
\eq
For any $\delta\in (0, \min(\alpha, 1-\alpha))$ we have
\bq\label{key3}
\begin{aligned}
\cN_2(\psi, \phi)&=\int_\Omega  \L^{-1+\alpha-\delta}\nabla^\perp\psi\cdot \L[\nabla\phi, \L^{-\alpha+\delta}]\L^\alpha \psi dx
+\int_\Omega  \L^{-1+\alpha}\nabla^\perp\psi\cdot \L[\nabla\phi , \L^{-\delta}]\L^\delta\psi dx.
\end{aligned}
\eq
Moreover,
\bq\label{cN2d:bound}
\la \cN_2(\psi, \phi)\ra \le C\Vert \nabla\phi\Vert_{W^{1, \infty}}\Vert \psi\Vert_{D(\L^{\alpha-\delta})}\Vert \psi\Vert_{D(\L^\alpha)}+C\Vert \nabla\phi\Vert_{W^{1, \infty}}\Vert \psi\Vert_{D(\L^\alpha)}\Vert \psi\Vert_{D(\L^\delta)}.
\eq
\end{lemm}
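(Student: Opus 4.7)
The plan is to derive both \eqref{cN2:bound} and \eqref{cN2d:bound} by Cauchy--Schwarz pairings in which one factor is controlled by the gradient mapping property \eqref{gradop} and the other by the commutator estimates of Theorem \ref{commu:new}, namely \eqref{cmn:e}, and its consequence \eqref{gain:1-s}. For \eqref{cN2:bound}, applying \eqref{gradop} with $\gamma=\alpha\in(0,1)$ and $\mu=\gamma-1$ yields $\Vert \L^{-1+\alpha}\nabla^\perp\psi\Vert_{L^2(\Omega)}\le C\Vert\psi\Vert_{D(\L^\alpha)}$, while \eqref{gain:1-s} with $s=\alpha\in (0,1)$ (recall $d=2$, so $d/2=1$) gives $\Vert \L^{1-\alpha}[\L^\alpha,\nabla\phi]\psi\Vert_{L^2(\Omega)}\le C\Vert\nabla\phi\Vert_{W^{1,\infty}}\Vert\psi\Vert_{D(\L^\alpha)}$. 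Multiplying these two bounds yields \eqref{cN2:bound}.

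To derive the decomposition \eqref{key3}, I would first invoke the operator identity $[\L^\alpha,\nabla\phi]\psi=\L^\alpha[\nabla\phi,\L^{-\alpha}]\L^\alpha\psi$ from the Remark following Theorem \ref{commu:new}, which recasts $\cN_2(\psi,\phi)$ as $\int_\Omega \L^{-1+\alpha}\nabla^\perp\psi\cdot \L[\nabla\phi,\L^{-\alpha}]\L^\alpha\psi\,dx$. Next, writing $\L^{-\alpha}=\L^{-\delta}\L^{-\alpha+\delta}$ and expanding the commutator produces the elementary algebraic identity
\[
[\nabla\phi,\L^{-\alpha}]=\L^{-\delta}[\nabla\phi,\L^{-\alpha+\delta}]+[\nabla\phi,\L^{-\delta}]\L^{-\alpha+\delta}.
\]
Applying this to $\L^\alpha\psi$, noting that $\L^{-\alpha+\delta}\L^\alpha\psi=\L^\delta\psi$, and multiplying on the left by $\L$ gives
\[
\L[\nabla\phi,\L^{-\alpha}]\L^\alpha\psi=\L^{1-\delta}[\nabla\phi,\L^{-\alpha+\delta}]\L^\alpha\psi+\L[\nabla\phi,\L^{-\delta}]\L^\delta\psi.
\]
Transferring the prefactor $\L^{-\delta}$ in the first term onto the paired factor $\L^{-1+\alpha}\nabla^\perp\psi$ via the self-adjointness of fractional powers then produces \eqref{key3}.

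The estimate \eqref{cN2d:bound} follows by bounding each of the two integrals in \eqref{key3} by Cauchy--Schwarz. For the first, \eqref{gradop} with $\gamma=\alpha-\delta$ and $\mu=\gamma-1$ controls $\Vert \L^{-1+\alpha-\delta}\nabla^\perp\psi\Vert_{L^2(\Omega)}\le C\Vert\psi\Vert_{D(\L^{\alpha-\delta})}$, and \eqref{cmn:e} with $s=\alpha-\delta\in(0,1)$ and $p=2$ (using $W^{1,2}_0(\Omega)=H^1_0(\Omega)=D(\L)$) controls $\Vert \L[\nabla\phi,\L^{-\alpha+\delta}]\L^\alpha\psi\Vert_{L^2(\Omega)}\le C\Vert\nabla\phi\Vert_{W^{1,\infty}}\Vert\psi\Vert_{D(\L^\alpha)}$. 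For the second, the same two tools applied with $\gamma=\alpha$ and $s=\delta\in(0,1)$ produce the analogous pair of bounds, and summing yields \eqref{cN2d:bound}. The main technical point is ensuring that the intermediate operator outputs such as $[\nabla\phi,\L^{-\alpha}]\L^\alpha\psi$ and the analogous objects in the refined splitting actually lie in $D(\L)=H^1_0(\Omega)$ so that $\L$ may be legitimately applied and paired against the adjacent $L^2$ factor; this membership is precisely what Theorem \ref{commu:new} delivers under the parameter constraints $\alpha,\delta,\alpha-\delta\in(0,1)$, all guaranteed by the hypothesis $\delta\in(0,\min(\alpha,1-\alpha))$.
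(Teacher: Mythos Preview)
Your proposal is correct and follows essentially the same approach as the paper: the bound \eqref{cN2:bound} via \eqref{gradop} and the identity $\L^{-\alpha}[\L^\alpha,a]=[a,\L^{-\alpha}]\L^\alpha$ combined with Theorem~\ref{commu:new}, and the refined bound \eqref{cN2d:bound} by estimating each term of \eqref{key3} with the same two tools. The only cosmetic difference is in how \eqref{key3} is reached: the paper expands $\L^{-\alpha+\delta}[\L^\alpha,\nabla\phi]\psi$ directly into $[\L^\delta,\nabla\phi]\psi+[\nabla\phi,\L^{-\alpha+\delta}]\L^\alpha\psi$ and carries the computation in the duality $\langle\cdot,\cdot\rangle_{D(\L^\delta),D(\L^{-\delta})}$, whereas you first pass to $[\nabla\phi,\L^{-\alpha}]\L^\alpha\psi$ and then apply the Leibniz rule $[\nabla\phi,\L^{-\delta}\L^{-\alpha+\delta}]=\L^{-\delta}[\nabla\phi,\L^{-\alpha+\delta}]+[\nabla\phi,\L^{-\delta}]\L^{-\alpha+\delta}$; both routes produce the same two terms and the same regularity checks.
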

\begin{proof}
1. By \eqref{gradop}, 
\[
\Vert \L^{-1+\alpha}\nabla^\perp\psi\Vert_{L^2} \le\Vert \psi\Vert_{D(\L^\alpha)}.
\]
On the other hand,  a direct calculation gives
\[
\L^{-\alpha}[\Lambda^\alpha, \nabla\phi] \psi=[\nabla\phi, \L^{-\alpha}]\L^\alpha \psi
\]
which, by virtue of Theorem \ref{commu:new}, belongs to $D(\L)$ and satisfies
\[
\Vert \L[\nabla\phi, \L^{-\alpha}]\L^\alpha \psi\Vert_{L^2}\le C\Vert \nabla\phi\Vert_{W^{1, \infty}}\Vert \L^\alpha \psi\Vert_{L^2}= C\Vert \nabla\phi\Vert_{W^{1, \infty}}\Vert \psi\Vert_{D(\L^\alpha)}.
\]
Therefore, the integral defining $\cN_2(\psi, \phi)$ in \eqref{key2} makes sense and obeys the bound \eqref{cN2:bound}.

2. Let $\delta\in [0, \min(\alpha, 1-\alpha))$. According to \eqref{key2},
\[
\begin{aligned}
\cN_2(\psi, \phi)&=\langle \L^{-1+\alpha}\nabla^\perp\psi, \L^{1-\alpha}[\Lambda^\alpha, \nabla\phi] \psi \rangle_{L^2, L^2}\\
&=\langle \L^{-1+\alpha-\delta}\nabla^\perp\psi, \L^{1-\alpha+\delta}[\Lambda^\alpha, \nabla\phi] \psi \rangle_{D(\L^{\delta}), D(\L^{-\delta})}.
\end{aligned}
\]
Now we write
\begin{align*}
\L^{1-\alpha+\delta}[\Lambda^\alpha, \nabla\phi] \psi&=\L\L^{-\alpha+\delta}[\Lambda^\alpha, \nabla\phi] \psi\\
&=\L\left\{\L^\delta(\nabla \phi\psi)-\L^{-\alpha+\delta}(a\L^\alpha \psi)\right\}\\
&=\L\left\{[\L^\delta,\nabla \phi]\psi+\nabla\phi\L^\delta \psi-\L^{-\alpha+\delta}(a\L^\alpha \psi)\right\}\\
&=\L\left\{[\L^\delta,\nabla \phi]\psi+\nabla\phi\L^{-\alpha+\delta}\L^\alpha \psi-\L^{-\alpha+\delta}(a\L^\alpha \psi)\right\}\\
&=\L[\L^\delta, \nabla\phi]\psi+\L[\nabla\phi, \L^{-\alpha+\delta}]\L^\alpha \psi,
\end{align*}
where, according to \eqref{gain:1-s}, 
\[
[\L^\delta, \nabla\phi]\psi\in D(\L^{1-\delta}),
\]
 so 
 \[
 \L[\L^\delta, \nabla\phi]\psi\in D(\L^{-\delta});
 \]
 on the other hand, according to Theorem \ref{commu:new},
 \[
 \L[\nabla\phi, \L^{-\alpha+\delta}]\L^\alpha \psi\in L^2(\Omega)\subset D(\L^{-\delta}).
 \]
  Thus, we can write
\begin{align*}
I&=\langle \L^{-1+\alpha-\delta}\nabla^\perp\psi, \L[\nabla\phi, \L^{-\alpha+\delta}]\L^\alpha \psi \rangle_{D(\L^{\delta}), D(\L^{-\delta})}+\langle \L^{-1+\alpha-\delta}\nabla^\perp\psi, \L[\L^\delta, \nabla\phi]\psi \rangle_{D(\L^{\delta}), D(\L^{-\delta})}\\
&=\int_\Omega  \L^{-1+\alpha-\delta}\nabla^\perp\psi\cdot \L[\nabla\phi, \L^{-\alpha+\delta}]\L^\alpha \psi dx+\int_\Omega\L^{-1+\alpha}\nabla^\perp\psi\cdot \L^{1-\delta}[\L^\delta, \nabla\phi]\psi dx\\
&=\int_\Omega  \L^{-1+\alpha-\delta}\nabla^\perp\psi\cdot \L[\nabla\phi, \L^{-\alpha+\delta}]\L^\alpha \psi dx+\int_\Omega  \L^{-1+\alpha}\nabla^\perp\psi\cdot \L[\nabla\phi, \L^{-\delta}]\L^\delta\psi dx.
\end{align*}
As in 1., an application of Theorems \ref{commu:CN}, \ref{commu:new}, and \eqref{gradop} (with  $(\gamma=\alpha-\delta, \mu=-1-\alpha-\delta)$ and $(\gamma=\alpha, \mu=-1+\alpha)$) leads to the bound \eqref{cN2d:bound}.
\end{proof}
Let us denote
\bq
\begin{aligned}
\cN_1(\psi, \phi)&=\int_\Omega  [\Lambda^\alpha, \nabla^\perp]\psi\cdot \nabla\phi\psi dx,\\
\cN(\psi, \phi)&=\mez\cN_1(\psi, \phi)-\mez\cN_2(\psi, \phi).
\end{aligned}
\eq
Putting together the above considerations, we have proved that
\begin{lemm}\label{lemm:rep}
 If $\psi\in H^1_0(\Omega)$ then 
\[
\int_\Omega u\tt\cdot\nabla\phi=\cN(\psi, \phi).
\]
If $\tt\in L^2(\Omega)$ then
\[
\la \cN(\psi, \phi)\ra\le C\Vert \phi\Vert_{H^4}\Vert \psi\Vert_{L^2}^2+C\Vert \nabla\phi\Vert_{W^{1, \infty}}\Vert \psi\Vert_{D(\L^{\alpha})}^2
\]
and for any $\delta\in (0, \min(\alpha, 1-\alpha))$,
\[
\la \cN(\psi, \phi)\ra\le C\Vert \phi\Vert_{H^4}\Vert \psi\Vert_{L^2}^2+C\Vert \nabla\phi\Vert_{W^{1, \infty}}\Vert \psi\Vert_{D(\L^{\alpha-\delta})}\Vert \psi\Vert_{D(\L^\alpha)}+C\Vert \nabla\phi\Vert_{W^{1, \infty}}\Vert \psi\Vert_{D(\L^\alpha)}\Vert \psi\Vert_{D(\L^\delta)}.
\]
\end{lemm}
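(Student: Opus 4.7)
The plan is to assemble the lemma from the pieces already established, since by definition $\cN(\psi,\phi)=\tfrac12\cN_1(\psi,\phi)-\tfrac12\cN_2(\psi,\phi)$, so the statement decouples into an estimate for $\cN_1$, an estimate for $\cN_2$, and the identity when $\psi\in H^1_0(\Omega)$.

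For the identity, I would invoke Lemma \ref{commu:key} directly. Formula \eqref{key} gives
\[
\int_\Omega \tt u\cdot\nabla\phi\,dx = \tfrac12\cN_1(\psi,\phi)-\tfrac12\int_\Omega\nabla^\perp\psi\cdot[\Lambda^\alpha,\nabla\phi]\psi\,dx,
\]
so only the identification of the second integral with $\cN_2(\psi,\phi)$ remains. Since $\psi\in H^1_0(\Omega)=D(\L)\subset D(\L^\alpha)$, we have $\nabla^\perp\psi\in L^2$ and $[\Lambda^\alpha,\nabla\phi]\psi\in L^2$ (the latter by Theorem \ref{commu:new2}, or by the gain-of-derivative observation \eqref{gain:1-s}), hence inserting $\L^{-1+\alpha}\L^{1-\alpha}$ in the integrand is legitimate and recovers $\cN_2(\psi,\phi)$ as in the first part of the proof of Lemma \ref{lemm:key2}.

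For the quantitative bounds, I use that $\tt=\L^\alpha\psi$, so the hypothesis $\tt\in L^2(\Omega)$ is equivalent to $\psi\in D(\L^\alpha)$, and both $\cN_1$ and $\cN_2$ are meaningful in this regime. The term $\cN_1$ is controlled by \eqref{cN1:bound}, obtained from Theorem \ref{commu:CN} (with $s=\alpha$, $a=\nabla\phi$, $p=q=2$) combined with three applications of Hardy's inequality. The term $\cN_2$ is controlled by \eqref{cN2:bound} of Lemma \ref{lemm:key2} for the first stated inequality, and by \eqref{cN2d:bound}---which relies on the alternative representation \eqref{key3}---for the $\delta$-refined inequality. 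Adding the $\cN_1$ and $\cN_2$ estimates yields the two claimed bounds.

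The main substantive work has already been carried out in Lemma \ref{lemm:key2} and in the commutator estimates of Theorems \ref{commu:CN}, \ref{commu:new}, \ref{commu:new2}; the present lemma is essentially a packaging step, and the only mild subtlety is checking that the pairings inside $\cN_2$ are well posed in the right dual space, which is exactly what Lemma \ref{lemm:key2} establishes. The $\delta$-refinement is retained because it is precisely the form needed later in the compactness argument, where one must trade a full $D(\L^\alpha)$ factor for a weaker $D(\L^{\alpha-\delta})$ or $D(\L^\delta)$ norm.
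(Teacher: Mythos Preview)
Your proposal is correct and takes essentially the same approach as the paper: the lemma is a packaging step that combines Lemma \ref{commu:key} for the identity, the bound \eqref{cN1:bound} for $\cN_1$, and the two bounds \eqref{cN2:bound} and \eqref{cN2d:bound} from Lemma \ref{lemm:key2} for $\cN_2$. Your only addition is the explicit justification that $\int_\Omega\nabla^\perp\psi\cdot[\Lambda^\alpha,\nabla\phi]\psi\,dx=\cN_2(\psi,\phi)$ when $\psi\in H^1_0(\Omega)$, which the paper leaves implicit; your argument via self-adjointness (using that $[\Lambda^\alpha,\nabla\phi]\psi\in D(\L^{1-\alpha})$ by \eqref{gain:1-s}) is the right one.
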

\subsection{Viscosity approximations}
 Let us fix $\tt_0\in L^2(\Omega)$ and a positive time $T$.  For each fixed $\eps>0$ we consider the viscosity approximation of \eqref{sSQG}:
\bq\label{sSQGe}
\begin{cases}
\partial_t\tt^\eps+u^\eps \cdot \nabla \tt^\eps-\eps \Delta \tt^\eps=0,&\quad t>0,\\
\tt^\eps=\tt_0, &\quad t=0
\end{cases}
\eq
with $u^\eps=\nabla^\perp\psi^\eps$, $\psi^\eps=\L^{-\alpha}\tt^\eps$.

Equation \eqref{sSQGe} can be solved using the Galerkin approximation method as follows. Denote by $\P_m$ the projection in $L^2(\Omega)$ onto the linear span $L^2_m(\Omega)$ of eigenfunctions $\{w_1,...,w_m\}$, {\it {\it i.e.}}
\[
\P_m f=\sum_{j=1}^mf_jw_j\quad\text{for}~f=\sum_{j=1}^\infty f_jw_j.
 \]
 We recall the following lemma which shows that for $\phi\in C_0^\infty(\Omega)$, $\P_m\phi$ are good approximations of $\phi$ in any Soblev space.
 \begin{lemm}[\protect{Lemma 3.1, \cite{ConNgu}}]\label{estPm}
 Let $\phi\in C_0^{\infty}(\Omega)$. For all $k\in \mathbb N$ we have
\bq\label{error:Pm}
\lim_{m\to\infty}\Vert\left(\mathbb I - \P_m\right)\phi\Vert_{H^k(\Omega)} = 0.
\eq
 \end{lemm}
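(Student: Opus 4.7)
The strategy is to exploit the fact that $\phi$ and all its derivatives vanish near $\partial\Omega$, so that every iterated Laplacian $(-\Delta)^j\phi$ is again smooth and compactly supported in $\Omega$. This will place $\phi$ in $D(\L^s)$ for every $s\ge 0$, after which the spectral definition of the norm makes $\P_m\phi\to\phi$ in $D(\L^s)$ an immediate tail-of-a-convergent-series statement. Finally, the continuous embedding from Proposition \ref{prop:inject} will transfer convergence in $D(\L^k)$ into convergence in $H^k(\Omega)$.

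The first step is to check that $\phi\in D(\L^s)$ for every $s\ge 0$. Since $\phi\in C_0^{\infty}(\Omega)$, the function $(-\Delta)^j\phi$ is again in $C_0^{\infty}(\Omega)$ and therefore belongs to $H^1_0(\Omega)=D(\L)$ for every integer $j\ge 0$. Using the characterization $D(\L^{2j+2})=\{f\in D(-\Delta):-\Delta f\in D(\L^{2j})\}$ together with induction on $j$, one obtains $\phi\in D(\L^{2j+1})$ for all $j\ge 0$; the intermediate values of $s$ are then filled in by an iterated application of the interpolation identity \eqref{inter}. The upshot is that $\phi$ lies in the intersection $\bigcap_{s\ge 0}D(\L^s)$.

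The second step is immediate from the spectral definition. Writing $\phi=\sum_j f_j w_j$ with $f_j=\int_\Omega\phi w_j\,dx$, the condition $\phi\in D(\L^s)$ means $\sum_j\lambda_j^s f_j^2<\infty$, so for every $s\ge 0$
\[
\Vert(\I-\P_m)\phi\Vert_{D(\L^s)}^2=\sum_{j>m}\lambda_j^s f_j^2\xrightarrow[m\to\infty]{}0.
\]
Given any $k\in\mathbb N$, Proposition \ref{prop:inject} furnishes the continuous embedding $D(\L^k)\hookrightarrow H^k(\Omega)$, hence
\[
\Vert(\I-\P_m)\phi\Vert_{H^k(\Omega)}\le C\,\Vert(\I-\P_m)\phi\Vert_{D(\L^k)}\xrightarrow[m\to\infty]{}0,
\]
which is \eqref{error:Pm}.

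The only step deserving care is the first one: the inclusion $\phi\in D(\L^s)$ for arbitrarily large $s$ requires that each new Laplacian of $\phi$ still satisfy the Dirichlet boundary condition needed to iterate the eigenfunction expansion. This is automatic here because smoothness and compact support inside $\Omega$ are preserved under $-\Delta$, so the boundary compatibility conditions implicit in $D(\L^{2j+1})$ are verified at every order. Everything else reduces to the spectral calculus on $L^2(\Omega)$ and the Sobolev embedding already established.
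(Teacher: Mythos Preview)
The paper does not supply its own proof of this lemma; it is quoted from \cite{ConNgu} without argument. Your proof is correct and follows the natural route one expects for such a statement: use the fact that $(-\Delta)^j\phi\in C_0^\infty(\Omega)\subset D(\L)$ for every $j$ to place $\phi$ in $\bigcap_{s\ge 0}D(\L^s)$, read off convergence of $\P_m\phi$ in each $D(\L^k)$ as a tail-of-series statement, and then invoke the embedding $D(\L^k)\hookrightarrow H^k(\Omega)$ of Proposition~\ref{prop:inject}. One cosmetic remark: the recursion you write, $D(\L^{2j+2})=\{f\in D(-\Delta):-\Delta f\in D(\L^{2j})\}$, yields $\phi\in D(\L^{2j+2})$ rather than $D(\L^{2j+1})$; but your actual reasoning---$(-\Delta)^j\phi\in D(\L)$ hence $\phi\in D(\L^{2j+1})$---is independently valid, and either index suffices for the conclusion $\phi\in\bigcap_{s\ge 0}D(\L^s)$.
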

The $m$th Galerkin approximation of \eqref{sSQGe} is the following ODE system in the finite dimensional space $\P_mL^2(\Omega) = L^2_m$:
\bq\label{Galerkin}
\begin{cases}
\dot \tt^\eps_m+\P_m(u^\eps_m\cdot\nabla\tt_m^\eps)-\eps\Delta\tt^\eps_m=0,&\quad t>0,\\
\tt_m^\eps=P_m\tt_0, &\quad t=0
\end{cases}
\eq
with $\tt_m(x, t)=\sum_{j=1}^m\tt_ j^{(m)} (t)w_j(x)$ and $u_m=\nabla^\perp\L^{-\alpha}\tt_m$ automatically satisfying $\cnx u_m=0$. Note that in general $u_m\notin L^2_m$.
The existence of solutions of \eqref{Galerkin} at fixed $m$ follows from the fact that this is an ODE:
\[
\frac{d\theta^{(m)}_l}{dt} + \sum_{j,k=1}^m\gamma^{(m)}_{jkl}\theta^{(m)}_j\theta^{(m)}_{k} +\eps\lambda_l\tt^{(m)}_l= 0
\label{galmode}
\]
with
\[
\gamma^{(m)}_{jkl} = \lambda_j^{\frac{-\alpha}{2}}\int_{\Omega}\left(\na^{\perp}w_j\cdot\na w_k\right)w_ldx.
\]
Since $\P_m$ is self-adjoint in $L^2$ , $u_m$ is divergence-free and $w_j$ vanishes at the boundary $\partial\Omega$, integration by parts with $\tt_m$ gives
\[
\int_\Omega \tt_m\P_m(u_m\cdot\nabla \tt_m)dx=\int_\Omega \tt_m u_m\cdot\nabla \tt_mdx=0
\]
and 
\[
-\int_\Omega \Delta\tt^\eps_m\tt^\eps_mdx=\int_\Omega \la\nabla\tt^\eps_m\ra^2dx.
\]
 It follows that 
 \[
 \mez\frac{d}{dt}\Vert \tt_m(\cdot, t)\Vert^2_{L^2(\Omega)}+\eps\Vert\nabla\tt^\eps_m\Vert_{L^2(\Omega)}^2=0,
 \]
  and thus for $t\in [0, T]$,
\bq\label{L^2bound}
\mez\Vert \tt^\eps_m(\cdot, t)\Vert^2_{L^2(\Omega)}+\eps\int_0^t\Vert\nabla\tt^\eps_m(\cdot, s)\Vert^2_{L^2(\Omega)}ds=\mez\Vert \tt_m^\eps(\cdot, 0)\Vert^2_{L^2(\Omega)}\le \mez \Vert \tt_0\Vert^2_{L^2(\Omega)}.
\eq
This can be seen directly on the ODE because $\gamma^{(m)}_{jkl}$ is antisymmetric in $k,l$. Therefore, the smooth solution $\tt_m^\eps$ of \eqref{Galerkin} exists globally and obeys the $L^2$ bound \eqref{L^2bound}. The sequence $(\tt^\eps_m)_m$ is thus uniformly in $m$ bounded in $L^\infty([0, T]; L^2(\Omega))\cap L^2([0, T]; H^1_0(\Omega))$. Consequently, for any $p\in [1, \infty)$ and any $q\in [1, \frac{2}{1-\alpha}]$, we have
\[
\begin{aligned}
&\tt_m^\eps\in L^2([0, T]; H^1_0(\Omega))\subset L^2([0, T]; L^p(\Omega)),\\
& u^\eps_m=\nabla^\perp\L^{-\alpha}\tt_m\in L^2([0, T]; H^\alpha(\Omega))\subset L^2([0, T]; L^q(\Omega))
\end{aligned}
\]
with bounds uniform with respect to $m$, where we have used Proposition \ref{prop:inject} to have 
\[
\L^{-\alpha}\tt_m\in L^2([0, T]; D(\L^{1+\alpha}))\subset L^2([0, T]; H^{1+\alpha}(\Omega)).
\]
 In particular, 
\bq
\label{reg:N:e}
\begin{aligned}
\Vert u^\eps_m\cdot \nabla\tt^\eps_m\Vert_{L^1([0, T]; H^{-1}(\Omega))}&=\Vert \cnx(u^\eps_m\cdot \tt^\eps_m)\Vert_{L^1([0, T]; H^{-1}(\Omega))}\\
&\le C\Vert \tt_m^\eps\Vert_{L^2([0, T]; H^1(\Omega))}^2\\
&\le \frac{C}{\eps}\Vert \tt_0\Vert_{H^1(\Omega)}^2
\end{aligned}
\eq
where \eqref{L^2bound} was invoked in the last inequality. Therefore, using \eqref{Galerkin} we obtain that $(\partial_t\tt^\eps_m)_m$ is uniformly in $m$ bounded in $L^1([0, T]; H^{-1}(\Omega))$. Then according to the Aubin-Lions lemma (\cite{Lions}), there exist a $\tt^\eps$,
\bq\label{reg:vis}
\tt^\eps\in L^\infty([0, T]; L^2(\Omega))\cap L^2([0, T]; H^1_0(\Omega)),
\eq
and a subsequence of $(\tt_m^\eps)_m$ such that
\bq\label{converge:vis}
\begin{aligned}
&\tt_m^\eps\to \tt^\eps\quad \text{strongly~in}~L^p([0, T]; H^{-\mu}(\Omega))\cap L^2([0, T]; H^{1-\mu}_0(\Omega))
\end{aligned}
\eq
for any $p<\infty$ and $\mu\in (0, 1)$.

Integrating by parts the first equation of \eqref{Galerkin} against any test function $\phi\in C^\infty_0(\Omega\times (0, T))$ gives
\bq\label{weak:approx} 
\int_0^T\int_\Omega\tt_m^\eps\partial_t\phi dxdt+\int_0^T \int_\Omega \tt_m^\eps u_m^\eps\cdot\nabla\P_m\phi(x, t)dxdt+\eps\int_0^T\int_\Omega \tt_m^\eps \Delta \phi dxdt=0.
\eq
In the limit $m\to \infty$, the first term and the third term converge repsectively to
\[
\int_0^T\int_\Omega\tt^\eps\partial_t\phi dxdt,\quad \eps\int_0^T\int_\Omega \tt^\eps\Delta \phi dxdt.
\]
It remains to study the nonlinear term:
\begin{align*}
N&:=\int_0^T \int_\Omega \tt_m^\eps u_m^\eps\cdot\nabla\P_m\phi dxdt-\int_0^T \int_\Omega \tt^\eps u_m^\eps\cdot\nabla\phi dxdt\\
&=\int_0^T \int_\Omega \tt_m^\eps u_m^\eps \cdot\nabla(\P_m\phi-\phi)dxdt+\int_0^T \int_\Omega (\tt_m^\eps-\tt^\eps) u_m^\eps\cdot\nabla\phi dxdt+\int_0^T \int_\Omega \tt^\eps(u_m^\eps-u^\eps)\cdot\nabla\phi dxdt\\
&=:N_1+N_2+N_3.
\end{align*}
 Lemma \ref{estPm} ensures that $\lim_{m\to \infty} N_1=0$. On the other hand, the strong convergence \eqref{converge:vis} with sufficiently small $\mu$ implies $
\lim_{m\to \infty}N_2=\lim_{m\to \infty}N_3=0$. Thus, we have proved that $\tt^\eps$ satisfies 
\[
\int_0^T\int_\Omega\tt^\eps \partial_t\phi dxdt+\int_0^T \int_\Omega \tt^\eps u^\eps \cdot\nabla\phi dxdt\\
+\eps\int_0^T\int_\Omega \tt^\eps \Delta \phi dxdt=0
\]
for all $\phi\in C^\infty_0(\Omega\times (0, T))$. Here, $\tt^\eps$ has the regularity \eqref{reg:vis}, and in view of \eqref{L^2bound},
\bq\label{uni:tt}
\Vert \tt^\eps\Vert_{L^\infty([0, T]; L^2(\Omega))}\le \Vert \tt_0\Vert_{L^2(\Omega)}.
\eq
Since $\psi^\eps\in D(\L^{1+\alpha})\subset H^1_0(\Omega)$, using Lemma \ref{lemm:rep} for the representation of the nonlinearity, we obtain for all $\phi\in C^\infty_0(\Omega\times (0, T))$,
\bq\label{weak:vis} 
\int_0^T\int_\Omega\tt^\eps \partial_t\phi dxdt+\int_0^T \cN(\psi^\eps, \phi)dt+\eps\int_0^T\int_\Omega \tt^\eps \Delta \phi dxdt=0.
\eq
Moreover, integrating by parts equation \eqref{Galerkin} with $\psi_m^\eps$ leads to
\[
\mez\frac{d}{dt}\Vert \psi^\eps_m(\cdot, t)\Vert_{D(\L^{\frac{\alpha}{2}})}^2+\eps\Vert \psi_m^\eps\Vert_{D(\L^{1+\frac{\alpha}{2}})}^2=0
\]
where we used the fact that the nonlinear term vanishes:
\[
\int_{\Omega}\psi_m^\eps\P_m(u_m^\eps\cdot \nabla \tt_m^\eps)dx=\int_{\Omega}\psi_m^\eps \cnx(\na^\perp\psi_m^\eps \tt_m)dx=-\int_{\Omega}\na \psi_m ^\eps\cdot\na^\perp\psi_m^\eps\tt_mdx=0.
\]
Consequently, integrating in time and letting $m\to \infty$ result in
\bq\label{cons:psie}
\Vert \psi^\eps(\cdot, t)\Vert_{D(\L^{\frac{\alpha}{2}})}^2+\eps\int_0^t\Vert \psi^\eps(\cdot, s)\Vert_{D(\L^{1+\frac{\alpha}{2}})}^2ds=\Vert \psi^\eps(\cdot, 0)\Vert_{D(\L^{\frac{\alpha}{2}})}^2\quad \forall t>0.
\eq
\subsection{Vanishing viscosity}  In order to extract a convergent subsequence of $\tt^\eps$ we need, in addition to \eqref{uni:tt}, a uniform bound for $\partial_t\tt^\eps$ in a lower norm. Let us note that the bound \eqref{reg:N:e} is not uniform in $\eps$. By \eqref{reg:vis}, $\tt^\eps \in D(\L)$ which implies $\psi^\eps=\L^{-\alpha}\tt^\eps\in D(\L^{1+\alpha})\subset D(\L)$. Lemma \ref{lemm:rep} then gives
\[
\la \int_\Omega \tt^\eps u^\eps\cdot \nabla \phi dx\ra \le C\Vert \phi\Vert_{H^4(\Omega)}\Vert \psi^\eps\Vert_{D(\L^\alpha)}^2\le C\Vert \phi\Vert_{H^4(\Omega)}\Vert \tt_0\Vert_{L^2(\Omega)}^2,
\]
and hence, in view of \eqref{weak:vis}, 
\[
\la \int_0^T\int_\Omega\tt^\eps \partial_t\phi dxdt\ra \le C\Vert \phi\Vert_{L^1([0, T]; H^4(\Omega))}(\Vert \tt_0\Vert_{L^2(\Omega)}+\Vert \tt_0\Vert_{L^2(\Omega)}^2)
\]
for all $\phi\in C^\infty_0(\Omega\times (0, T))$. Consequently,
\bq\label{uni:dt}
\Vert \partial_t \tt^\eps\Vert_{L^\infty([0, T]; H^{-4}(\Omega))}\le C(\Vert \tt_0\Vert_{L^2(\Omega)}+\Vert \tt_0\Vert_{L^2(\Omega)}^2).
\eq
In view of the uniform bounds \eqref{uni:tt} and \eqref{uni:dt}, the Aubin-Lions lemma ensures the existence of a $\tt$,
\[
\tt\in L^\infty([0, T]; L^2(\Omega))\cap C([0, T]; H^{-\nu}(\Omega))\quad \forall\nu>0,
\]
and a subsequence $\tt^\eps$ such that
\begin{align}
\label{weak:tte}
&\tt^\eps \rightharpoonup \tt \quad\text{weakly~ in}~ L^2([0, T]; L^2(\Omega)),\\
\label{strong:tte}
&\tt^\eps \to \tt\quad\text{strongly~in}~C([0, T]; H^{-\nu}(\Omega))\quad \forall \nu>0.
\end{align}
Consequently, with $\psi:=\L^{-\alpha}\tt$, 
\[
\psi \in L^\infty([0, T]; D(\L^{\alpha}))\cap C([0, T]; D(\L^{\alpha-\nu})\quad \forall \nu>0,
\]
we have 
\begin{align}
\label{weak:psie}
&\psi^\eps \rightharpoonup \psi \quad\text{weakly~ in}~ L^2([0, T]; D(\L^{\alpha})),\\
\label{strong:psie}
&\psi^\eps \to \psi\quad\text{strongly~in}~C([0, T]; D(\L^{\alpha-\nu}))\quad \forall \nu>0.
\end{align}
Let $\phi\in C^\infty_0(\Omega\times (0, T))$ a be fixed test function, we send $\eps$ to $0$ in the weak formulation \eqref{weak:vis}. The first term converges to $\int_0^T\int_\Omega \tt\partial_t\phi dx dt$ and the last term converges to $0$. Regarding the nonlinear term, we shall prove that
\[
R^\eps:=\int_0^T \cN(\psi^\eps, \phi) -\cN(\psi, \phi) dt
\]
converges to $0$. In view of \eqref{key}, \eqref{key3}, we have $2R^\eps=\sum_{j=1}^6 I^\eps_j$ with
\begin{align*}
&I^\eps_1=\int_\Omega  [\Lambda^{2-\beta}, \nabla^\perp](\psi_\eps-\psi)\cdot \nabla\phi\psi_\eps dx,\\
&I^\eps_2=\int_\Omega  [\Lambda^{2-\beta}, \nabla^\perp]\psi\cdot \nabla\phi(\psi_\eps-\psi)dx,\\
&I^\eps_3=-\int_0^T\int_\Omega  \L^{-1+\alpha-\delta}\nabla^\perp(\psi^\eps-\psi)\cdot \L[\nabla\phi, \L^{-\alpha+\delta}]\L^\alpha \psi^\eps dxdt\\
&I^\eps_4=-\int_0^T\int_\Omega  \L^{-1+\alpha-\delta}\nabla^\perp\psi\cdot \L[\nabla\phi, \L^{-\alpha+\delta}]\L^\alpha (\psi_\eps-\psi) dxdt,\\
&I^\eps_5=-\int_\Omega  \L^{-1+\alpha}\nabla^\perp(\psi_\eps-\psi)\cdot \L[\nabla\phi , \L^{-\delta}]\L^\delta\psi dx,\\
&I^\eps_6=-\int_\Omega  \L^{-1+\alpha}\nabla^\perp\psi_\eps\cdot \L[\nabla\phi , \L^{-\delta}]\L^\delta(\psi_\eps-\psi) dx
\end{align*}
with $\delta\in (0, \min(\alpha, 1-\alpha))$.

By virtue of Theorem \ref{commu:CN},
\[
|I^\eps_1|\le C(\phi)\Vert \psi_\eps-\psi\Vert_{L^2(\Omega)}\Vert \psi_\eps\Vert_{L^2(\Omega)},\quad |I^\eps_2|\le C(\phi)\Vert \psi_\eps-\psi\Vert_{L^2(\Omega)}\Vert \psi\Vert_{L^2(\Omega)}.
\]
Hence $\lim_{\eps\to 0} I^\eps_1=\lim_{\eps\to 0} I^\eps_2=0$ in view of the convergence \eqref{strong:psie} with $\nu<\alpha$.\\
As for \eqref{cN2d:bound},
\[
\la I^\eps_3\ra \le C\Vert \nabla\phi\Vert_{L^1([0, T]; W^{1, \infty})}\Vert \psi^\eps-\psi\Vert_{L^\infty([0, T]; D(\L^{\alpha-\delta}))}\Vert \psi^\eps\Vert_{L^\infty([0, T]; D(\L^\alpha))} 
\]
which combined with \eqref{strong:psie} leads to $\lim_{\eps\to 0}I^\eps_3=0$. Because $\L[\nabla\phi, \L^{-\alpha+\delta}]\L^\alpha$ is norm continuous from $L^2([0, T]; D(\L^\alpha))$ to $L^2([0, T]; (\Omega))$, it is weak-weak continuous, and thus $\lim_{\eps\to 0}I^\eps_4=0$ noticing that $\L^{-1+\alpha-\delta}\nabla^\perp\psi\in L^\infty([0, T]; L^2(\Omega))\subset  L^2([0, T]; L^2(\Omega))$. Similarly, $\lim_{\eps\to 0}I_5^\eps=0$ since $\L^{-1+\alpha}\nabla^\perp(\psi_\eps-\psi) \rightharpoonup 0$ in $L^2([0, T]; D(\L^{\alpha}))$ (by \eqref{weak:psie}) and $\L[\nabla\phi , \L^{-\delta}]\L^\delta\psi \in L^2([0, T]; L^2(\Omega))$ (by Theorem \ref{commu:new}). Finally, by Theorem \ref{commu:new}, 
\[
\la I_6^\eps\ra\le  \Vert \L^{-1+\alpha}\nabla^\perp\psi_\eps\Vert_{L^2(\Omega)} \Vert\L[\nabla\phi , \L^{-\delta}]\L^\delta(\psi_\eps-\psi)\Vert_{L^2(\Omega)}\le \Vert \psi_\eps\Vert_{D(\L^\alpha)} \Vert \psi_\eps-\psi\Vert_{D(\L^\delta)}\to 0
\]
noticing that $\delta<\alpha$. We conclude that
\[
\int_0^T\int_\Omega \tt\partial_t \phi dxdt+\int_0^T\cN(\psi, \phi)dt=0\quad\forall \phi\in C^\infty_0(\Omega\times (0, T)).
\]
Moreover, because of the strong convergence \eqref{strong:tte} the initial data is attained:
\[
\tt(\cdot, 0)=\lim_{\eps\to 0}\tt^\eps(\cdot, 0)=\lim_{\eps\to 0}\tt_0(\cdot)=\tt_0(\cdot)\quad\text{in}~H^{-\nu}(\Omega)\quad\forall \nu>0.
\]
Sending $\eps\to 0$ in \eqref{cons:psie} leads to the conservation 
\[
\Vert \psi(\cdot, t)\Vert_{D(\L^{\frac{\alpha}{2}})}=\Vert \psi(\cdot, 0)\Vert_{D(\L^{\frac{\alpha}{2}})}\quad \forall t>0.
\]
Finally, the energy inequality \eqref{energyin} follows from \eqref{uni:tt} and lower semicontinuity.
\begin{rema}\label{rema:method}
If we implement directly the Galerkin approximations for \eqref{sSQG} then  in view of \eqref{key}, we need to bound
\[
\la \int_\Omega  [\Lambda^{\alpha}, \nabla^\perp]\psi_m\cdot \nabla \P_m\phi\psi_mdx\ra.
\]
However, the commutator $[\Lambda^{\alpha}, \nabla^\perp]$ then cannot be bounded by means of Theorem \ref{commu:CN} because $\nabla \P_m\phi$ does not vanish on the boundary even though $\phi$ has compact support. In \cite{ConNgu}, we overcame this by first using Lemma \ref{estPm} and the fact that $u_m\tt_m$ is uniformly bounded in $L^1$ to approximate $
\int_\Omega u_m\tt_m\nabla\P_m\phi$ by $\int_\Omega u_m\tt_m\nabla\phi$. When $\alpha<1$, this argument breaks down since $u_m\tt_m$ is not anymore uniformly bounded in $L^1$. This explains why we proceeded the proof of Theorem \ref{main} using vanishing viscosity approximations.
\end{rema}
\section*{Appendix: Proof of Theorem \ref{commu:new}}\label{appendix}
In view of the identity 
\[
D^{-r}=c_r\int_0^\infty t^{-1+r}e^{-tD}dt
\] 
with $D, r>0$ we have the representation of negative powers of Laplacian via heat kernel:
\bq\label{frac}
\Lambda^{-s}f(x)=c_s\int_0^\infty t^{-1+\frac s2}e^{t\Delta} f(x)dt,\quad s>0.
\eq
Let $H(x, y, t)$ denote the heat kernel of $\Omega$, {\it {\it i.e.}}
\[
e^{t\Delta} f(x)=\int_\Omega H(x, y, t)f(y)dy\quad\forall x\in \Omega.
\]
We have from \cite{LiYau} the following bounds on $H$ and its gradient:
\bq\label{H}
H(x, y, t)\le Ct^{-\frac d2}e^{-\frac{|x-y|^2}{Kt}},
\eq
\bq\label{gradH}
|\nabla_x H(x, y, t)|\le Ct^{-\mez-\frac d2}e^{-\frac{|x-y|^2}{Kt}}
\eq
for all $(x,y)\in \Omega\times \Omega$ and $t>0$. 

We will also use the elementary estimate
\bq\label{integral}
\int_0^\infty t^{-1-\frac m2}e^{-\frac{p^2}{Kt}}dt\le C_{K, m}p^{-m},\quad m, p, K>0.
\eq
Let $f\in C^\infty_0(\Omega)$. Using \eqref{frac} we have
\bq\label{commu:new:f}
\begin{aligned}
\left[\L^{-s}, a\right] f(x)&=c_s\int_0^\infty t^{-1+\frac s2}\int_\Omega H(x, y, t)a(y)f(y)dt-c_sa(x)\int_0^\infty t^{-1+\frac s2}\int_\Omega H(x, y, t)f(y)dt\\
&=c_s\int_0^\infty t^{-1+\frac s2}\int_\Omega H(x, y, t)[a(y)-a(x)]f(y)dt.
\end{aligned}
\eq
In view of \eqref{H}, \eqref{integral}, and the assumption that $s<d$, we deduce that
\bq\label{cn:est}
\begin{aligned}
\la [\L^{-s}, a] f(x)\ra&\le C\Vert a\Vert_{L^\infty}\int_\Omega\int_0^\infty t^{-1+\frac{s}{2}-\frac d2}e^{-\frac{|x-y|^2}{Kt}} dt\la f(y)\ra dy\\
&\le C\Vert a\Vert_{L^\infty}\int_\Omega \frac{|f(y)|}{|x-y|^{d-s}}dy.
\end{aligned}
\eq
Let us recall the Hardy-Littlewood-Sobolev inequality. Let $\alpha\in(0, d)$ and $(p, r)\in (1, \infty)$ satisfy
\bq\label{HLS:exp}
\frac 1p+\frac \alpha d=1+\frac 1r.
\eq
A constant $C$ then exists such that
\bq\label{HLS}
\Vert f*|\cdot|^{-\alpha}\Vert_{L^r(\Rr^d)}\le C\Vert f\Vert_{L^p(\Rr^d)}.
\eq
Applying \eqref{HLS} with $\alpha=d-s$ leads to
\bq\label{comm:1}
\Vert [\L^{-s}, a] f\Vert_{L^r(\Omega)}\le C\Vert a\Vert_{L^\infty}\Vert f\Vert_{L^p(\Omega)}.
\eq
Let $\gamma_0$ denote the trace operator for $\Omega$. It is readily seen that $\gamma_0(\L^{-s}f)=0$ because $\L^{-s}f\in D(\L^m)$ for all $m\ge 0$, hence $\gamma_0(a\L^{-s}f)=\gamma_0(a)\gamma_0(\L^{-s}f)=0$. In addition, $a f\in H^1_0(\Omega)=D(\L)$, hence $\L^{-s}(af)\in D(\L^{1+s})\subset H^1_0(\Omega)$ and $\gamma_0(\L^{-s}(af))=0$. We deduce that 
\bq\label{trace:c}
\gamma_0([\L^{-s}, a] f)=0.
\eq
Next, for gradient bound we differentiate \eqref{commu:new:f}  and obtain
\begin{align*}
\nabla [\L^{-s}, a] f(x)&=c_s\int_0^\infty t^{-1+\frac s2}\int_\Omega \nabla_xH(x, y, t)[a(y)-a(x)]f(y)dt\\
&\quad -c_s\int_0^\infty t^{-1+\frac s2}\int_\Omega H(x, y, t)\nabla a(x)f(y)dt\\
&=:I +II.
\end{align*}
The term $II$ can be treated as above and we have
\bq\label{comm:2}
\Vert II\Vert_{L^r(\Omega)}\le C\Vert \nabla a\Vert_{L^\infty}\Vert f\Vert_{L^p(\Omega)}.
\eq
For $I$, we use the gradient estimate \eqref{gradH} for the heat kernel and the fact that 
\[
|a(x)-a(y)|\le \Vert \nabla a\Vert_{L^\infty }|x-y|
\]
 to arrive at
\[
\begin{aligned}
|I(x)|&\le C\Vert  \nabla a\Vert_{L^\infty } \int_\Omega \int_0^\infty t^{-1+\frac s2-\mez-\frac d2}e^{-\frac{|x-y|^2}{Kt}} dt |x-y||f(y)|dy\\
&\le C\Vert  \nabla a\Vert_{L^\infty} \int_\Omega \frac{|f(y)|}{|x-y|^{d-s}}dy.
\end{aligned}
\]
Appealing to \eqref{HLS} as before gives
\[
\Vert I\Vert_{L^r(\Omega)}\le C\Vert \nabla a\Vert_{L^\infty}\Vert f\Vert_{L^p(\Omega)}
\]
 which, combined with \eqref{comm:1}, \eqref{comm:2}, \eqref{trace:c}, leads to
\bq\label{cn1}
\Vert [\L^{-s}, a]f\Vert_{W^{1, r}_0(\Omega)}\le C\Vert a\Vert_{W^{1, \infty}(\Omega)}\Vert f\Vert_{L^p(\Omega)}
\eq
where $p, r$ satisfy \eqref{HLS} with $\alpha=d-s$.  Using the density of $C^\infty_0(\Omega)$ in $L^p(\Omega)$ for $p\in (1, \infty)$, and extension by continuity we conclude that the estimate \eqref{cn1} holds for any $f\in L^p(\Omega)$.

Now, for any $p\in (0, \infty)$, if $s<\frac{d}{p}$ then $r\in (1, \infty)$ given by 
\[
\frac{1}{r}=\frac{1}{p}-\frac{s}{d}
\]
satisfies \eqref{HLS}. Because $r>p$ and $\Omega$ is bounded, the continuous embedding $W^{1, r}_0(\Omega)\subset W^{1, p}_0(\Omega)$ yields
\bq\label{cn2}
\Vert [\L^{-s}, a]f\Vert_{W^{1, p}_0(\Omega)}\le C\Vert a\Vert_{W^{1, \infty}(\Omega)}\Vert f\Vert_{L^p(\Omega)}.
\eq
\section*{Appendix: Proof of Theorem \ref{commu:new2}}\label{appendix2}
In view of the identity 
\[
\lambda^{\frac s2}=c_s\int_0^\infty t^{-1-\frac s2}(1-e^{-t\lambda})dt
\] 
with $0< s<2$ and 
\[
1=c_s\int_0^\infty t^{-1-\frac s2}(1-e^{-t})dt
\]
we have the representation of the fractional Laplacian via heat kernel:
\bq\label{frac}
\Lambda^sf(x)=c_s\int_0^\infty t^{-1-\frac s2}(1-e^{t\Delta})f(x)dt,\quad 0<s<2.
\eq
Appealing to this representation, we have for $f\in C^\infty_0(\Omega)$
\[
[\L^s, a]f(x)=c_s\int_0^\infty t^{-1-\frac s2}\int_\Omega H(x, y, t)dt[a(x)-a(y)]f(y)dy.
\] 
In view of \eqref{H}, the fact that
\[
|a(x)-a(y)|\le \Vert a\Vert_{C^\gamma}|x-y|^\gamma,
\]
and \eqref{integral}, we deduce that
\begin{align*}
\la [\L^s, a]f(x)\ra &\le c_s \Vert a\Vert_{C^\gamma}\int_\Omega \int_0^\infty t^{-1-\frac s2-\frac d2}e^{-\frac{|x-y|^2}{Kt}} dt |x-y|^\gamma |f(y)|dy\\
&\le c_s \Vert a\Vert_{C^\gamma}\int_\Omega \frac{|f(y)|}{|x-y|^{d+s-\gamma}}dy.
\end{align*}
Then as in the proof of Theorem \ref{commu:new}, if $s<\gamma$ (note that $d+s-\gamma>0$), an application of the Hardy-Littlewood-Sobolev inequality leads to the bound \eqref{cmn2:e0}. Finally, \eqref{cmn2:e} follows from \eqref{cmn2:e0} and the fact that $\Omega$ is bounded.

{\bf{Acknowledgment.}} The author was partially supported by  NSF grant DMS-1209394. The author would like to thank Professor Peter Constantin for helpful comments on the paper.

\end{document}